\numberwithin{equation}{section}
\tikzset{sgplattice/.style={inner sep=1pt,norm/.style={red!50!blue},char/.style={blue!50!black},
  lin/.style={black!50}},cnj/.style={black!50,yshift=-2.5pt,left=-1pt of #1,scale=0.5,fill=white}}
\DeclareFontFamily{U}{mathb}{\hyphenchar\font45}
\DeclareFontShape{U}{mathb}{m}{n}{
      <5> <6> <7> <8> <9> <10> gen * mathb
      <10.95> mathb10 <12> <14.4> <17.28> <20.74> <24.88> mathb12
      }{}
\DeclareSymbolFont{mathb}{U}{mathb}{m}{n}
\DeclareMathSymbol{\righttoleftarrow}{3}{mathb}{"FD}
\theoremstyle{plain}
\newtheorem{prop}{Proposition}[section]
\theoremstyle{definition}
\newtheorem{rema}[prop]{Remark}
\newtheorem{exam}[prop]{Example}
\newcommand{\actsfromright}{\righttoleftarrow}
\newcolumntype{C}[1]{>{\centering\let\newline\\\arraybackslash\hspace{0pt}}m{#1}}
\def\cD{{\mathcal D}}
\def\cO{{\mathcal O}}
\def\rg{{\mathrm g}}
\def\Am{{\mathrm{Am}}}
\def\sA{{\mathsf A}}
\def\bA{{\mathbb A}}
\def\bP{{\mathbb P}}
\def\bZ{{\mathbb Z}}
\def\bC{{\mathbb C}}
\def\rH{{\mathrm H}}
\def\Br{\mathrm{Br}}
\def\Bl{\mathrm{Bl}}
\def\Pic{\mathrm{Pic}}
\def\Gal{\mathrm{Gal}}
\def\Aut{\mathrm{Aut}}
\def\Hom{\mathrm{Hom}}
\def\lim{\mathrm{lim}}
\def\Spec{\mathrm{Spec}}
\def\Cr{\mathrm{Cr}}
\def\Cl{\mathrm{Cl}}
\def\Z{\mathbb Z}
\begin{document}
\title[]{Computing the equivariant Brauer group}
\author[]{Alena Pirutka and Zhijia Zhang}

\address{Courant Institute of Mathematical Sciences, New York University, New York, 10012, U.S.A.}
\email{pirutka@cims.nyu.edu}

\email{zhijia.zhang@cims.nyu.edu}

\date{\today}

\begin{abstract}
Let $X$ be a smooth projective rational variety carrying a regular action of a finite abelian group $G$.
We give examples of effective computation of the Brauer group of the quotient stack $[X/G]$ in dimensions $2$ and $3$ using residues in Galois cohomology and the geometry of fixed loci. In particular, we compute $\Br([X/G])$ for all $G$-minimal del Pezzo surfaces.
\end{abstract}

\maketitle

\section{Introduction}

Let $k$ be a field of characteristic $0$. Consider a smooth projective rational variety $X$ over an algebraic closure $\bar k$ of $k$ carrying a regular and generically free action of a finite group $G$. Studying such actions up to equivariant birationality is a classical and active area in birational geometry. Of particular interest is the {\em linearizability problem},  which asks whether or not the $G$-action on $X$ is {\it linearizable}, i.e., equivariantly birational to a linear $G$-action on $\bP^n.$ An arithmetic counterpart of this problem is the classical {\em rationality problem} over nonclosed fields $k$, where the Galois action is considered as an analogue of the $G$-action.

An established strategy to study both linearizability and rationality problems is to seek nontrivial birational invariants. The similarities between the two problems are well reflected in the following invariant: the group cohomology \begin{align}\label{eq:H1P}
    \rH^1(H, \mathrm{Pic}(X_{\bar k})),\quad H\subseteq G.
\end{align} 
This invariant was first studied by Yu. Manin \cite{M} in the arithmetic setting, where $G=\Gal(\bar k/k)$ is the Galois group and the action is induced from the Galois action on $\bar k$.
F. Bogomolov and Y. Prokhorov \cite{BP} extended it to the equivariant setting, when $G$ is a finite group and the action comes from geometric automorphisms of $X_{\bar k}$. In the respective cases, the vanishing of \eqref{eq:H1P} for every subgroup $H$ of $G$ is a necessary condition for $X$ to be stably rational over $k$, and for the $G$-action on $X_{\bar k}$ to be stably linearizable. Applications of this invariant to the study  of the linearizability problem can be found in \cite{CTZ, HTZ}. 

In the arithmetic setting, when $k$ is a nonclosed field, the Leray spectral sequence for the Galois action gives rise to a well-known long exact sequence
\begin{multline}\label{arithss}
0\to\Pic(X)\to\Pic(X_{\bar k})^{\Gal(\bar k/k)}\to\Br(k)\to\\
\to\ker(\Br(X)\to\Br(X_{\bar k}))\to\rH^1(\Gal(\bar k/k),\Pic(X_{\bar k}))\to\rH^3(\Gal(\bar k/k),\bar k^\times),
\end{multline}
where $\Br(k)$ and $\Br(X)$ are the Brauer group of $k$ and $X$ respectively. The group $\ker(\Br(X)\to\Br(X_{\bar k}))$ is known as the {\em algebraic part} of the Brauer group. 

In the equivariant setting, when $k=\bar k$, the Leray spectral sequence for the $G$-action 
produces an exact sequence similar to \eqref{arithss}
\begin{multline}\label{ss}
0\to\Hom(G,k^\times)\to\Pic(X,G)\to\Pic(X)^G\to\rH^2(G,k^\times)\to\\\to\Br([X/G])\to\rH^1(G,\Pic(X))\to\rH^3(G,k^\times),
\end{multline}
where $\Pic(X,G)$ is the group of $G$-linearizable line bundles on $X$ and $\Br([X/G])$ is the Brauer group of the quotient stack $[X/G]$. The group $\Br([X/G])$ is a $G$-stably birational invariant, and can be viewed as an analogue of the algebraic part of the Brauer group as in \eqref{arithss}.

From now on, we focus on the equivariant setting with $k=\bar k$, and study the groups $\Br([X/G])$ and $\rH^1(G,\Pic(X))$. Given a $G$-action on $X$, it can be computationally challenging to find the induced $G$-action on $\mathrm{Pic}(X)$, as this requires a thorough analysis of divisors on $X$. On the other hand, the geometry of the fixed locus $X^G$ contains rich information readily available in the equivariant setting, but absent in the arithmetic setting (where the Galois fixed locus simply consists of all $k$-rational points). 

When $G$ is a cyclic group acting on a smooth rational surface $X$ with {\em maximal} stabilizers, the works of F. Bogomolov and Y. Prokhorov, and E. Shinder \cite{BP,S} give a formula for $\rH^1(G,\Pic(X))$ only involving information about the $G$-fixed curves on $X$. Generalizing this, for any finite group $G$ acting on a smooth projective variety $X$, A. Kresch and Y. Tschinkel gave an algorithm to compute $\Br([X/G])$ which only requires information about divisors with nontrivial stabilizers, and presented
examples of effective computations when $X$ is a rational surface \cite{KT22,KT24}.

In this note, we extend the scope of applications of this algorithm to dimension $3$. In particular, we produce a nontrivial class in $\Br([\tilde X/G])$ for a smooth model $\tilde X$ of a singular cubic threefold $X$, and showcase the connection between $\Br([\tilde X/G])$ and $\rH^1(G,\Pic(\tilde X))$ through this example (see Remark~\ref{rema:connection}). We also complete the computations of $\Br([X/G])$ in dimension 2 for all $G$-minimal del Pezzo surfaces $X$ and finite abelian groups $G$.

Here is a road map of the paper. In Section~\ref{reminders}, we review basic facts about the Brauer group of the quotient stack. In Section \ref{dim3}, we produce an example with nontrivial $\Br([X/G])$ in dimension 3. We compute $\Br([X/G])$ in dimension 2 in Section \ref{dim2}.

\

\paragraph{\bf Acknowledgements.} We would like to thank Yuri Tschinkel for many helpful discussions, and Andrew Kresch for comments on the manuscript.  The first author was partially supported by NSF grant DMS-2201195.

\section{Preliminaries} \label{reminders}

\subsection{Brauer groups}

Let $X$ be a smooth projective variety over a field $k$ and $n$ a positive integer invertible in $k$. Let $\rH^i(k(X), \mu_n)$ be the Galois cohomology of the function field of $X$ with $\mu_n$-coefficients, where $\mu_n$ is the \'etale $k$-group scheme of the $n^{\mathrm{th}}$-roots of unity (in particular, $\mu_n\simeq\bZ/n\bZ$ when $k$ is algebraically closed).

If $v$ is a discrete valuation on the field $k(X)$, one has the residue maps
\begin{equation*}
\partial_v^i: \rH^i(k(X), \mu_{n}^{\otimes j})\to \rH^{i-1}(\kappa(v), \mu_{n}^{\otimes (j-1)}),
\end{equation*}
where $\kappa(v)$ is the residue field.
In particular, for $a\in \rH^1(k(X),\bZ/2\bZ)$ and for $(a,b)\in \rH^2(k(X),\bZ/2\bZ)$, one has 
\begin{align}\label{eqn:valeq}
    \partial_v^1(a)&=v(a)\mbox{ mod }2 \in  \bZ/2\bZ,\notag\\
    \partial_v^2(a,b)&=(-1)^{v(a)v(b)}\overline{a^{v(b)}b^{-v(a)}}\in\kappa(v)^{\times}/(\kappa(v)^{\times})^2,
\end{align}
where for a unit $u$ in the valuation ring of $v$, we denote by $\bar u$ its image in the residue field $\kappa(v)$.

For an irreducible divisor $D$ on $X$, we denote by $v_D$ the associated divisorial valuation on $k(X)$, $\partial_D^i$  the corresponding residue maps, and $\kappa(v_{D})$ or $\kappa(D)$ the residue field. Similarly, for $\xi\in X^{(1)}$ a codimension $1$ point of $X$, we denote by $v_\xi$ and $\partial_\xi^i$ the associated valuation and residue maps.
The $n$-torsion in the Brauer group of $X$ can be computed via

\begin{equation}\label{Brclassical}
\Br(X)[n]=\bigcap_{D} \mathrm{ker}(\partial^2_D),
\end{equation}
where $D$ runs over all irreducible divisors on $X$ (see \cite[Proposition 4.2.3]{CT} and \cite[Theorem 4.1.1]{CT}; note that we assume that $X$ is smooth and projective).

\medskip

Now let $k$ be an algebraically closed field of characteristic zero and $X$ a smooth projective variety over $k$ carrying a generically free regular action of a finite group $G$. An analogue of the classical formula \eqref{Brclassical} for $\Br([X/G])$, the Brauer group of the quotient stack $[X/G]$, is established in \cite{KT22, KT24}:

\begin{prop}\label{remres}
Let $k$ be an algebraically closed field of characteristic zero and $X$ a smooth  projective variety over $k$ carrying a generically free regular action of a finite group $G$.
For any irreducible divisor $D$ on $X$, we denote by $I_D$ the stabilizer group
$$
I_D:=\{g\in G \,|\, g\mbox{ acts trivially on }D\}.
$$
Then the $n$-torsion subgroup of $\Br([X/G])$, denoted by $\Br([X/G])[n]$, can be computed via
\begin{equation}\label{Brstack}
\Br([X/G])[n]=\bigcap_{D} \mathrm{ker}\left( |I_D|\cdot \partial^2_{D'}\right)\subset H^2(k(X)^G, \mathbb Z/n\bZ),
\end{equation}
where $D$ runs over all irreducible divisors on $X$, $|I_D|$ is the order of $I_D$, 
and $\partial^2_{D'}$ is the  residue map in degree $2$ corresponding to the divisorial valuation on $k(X)^G$ given by the image $D'$ of $D$.
\end{prop}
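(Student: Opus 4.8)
The plan is to follow the template of the classical formula~\eqref{Brclassical}, adapted to the stack $\mathcal{X}:=[X/G]$: establish injectivity of $\Br(\mathcal X)[n]$ into $H^2(k(X)^G,\bZ/n\bZ)$, identify the image by purity as the set of classes extending over every codimension-one point of $\mathcal X$, and then determine the local condition at each such point, which is where the order $|I_D|$ of the inertia enters. Since the $G$-action is generically free, $\mathcal X$ is a smooth separated Deligne--Mumford stack with trivial generic stabilizer, so $k(\mathcal X)=k(X)^G$ and the residual gerbe at the generic point is trivial; the injectivity $\Br(\mathcal X)\hookrightarrow\Br(k(\mathcal X))$ then follows by reducing to the case of regular schemes via the \'etale-local quotient presentations of $\mathcal X$. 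Applying purity for the Brauer group through these same presentations (the stacky analogue of the limit argument behind~\eqref{Brclassical}), a class $\alpha\in H^2(k(X)^G,\bZ/n\bZ)$ lies in $\Br(\mathcal X)[n]$ if and only if it extends over the local ring of every codimension-one point of $\mathcal X$. Such points are in bijection with the prime divisors $D'$ of the coarse space $X/G$ (which is normal), hence with the $G$-orbits of prime divisors $D\subset X$; moreover $X/G$ is regular along each $D'$ (its singular locus has codimension $\geq 2$), so each residue $\partial^2_{D'}$ is defined, and it remains to pin down the condition at the point of $\mathcal X$ over $D'$.

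Fix $D$ and set $e:=|I_D|$. The inertia $I_D$ acts faithfully on the one-dimensional normal space to $D$, hence is cyclic of order $e$; consequently, \'etale-locally near the generic point of $D'$, the stack $\mathcal X$ is isomorphic over $X/G$ to the $e$-th root stack along $D'$, that is, to $[\Spec R'/\mu_e]$, where $R'$ is the totally ramified cyclic degree-$e$ extension of the discrete valuation ring $R:=\mathcal{O}_{X/G,D'}$ (ramification index $e$, residue field unchanged, equal to $\kappa(D')$, and $(R')^{\mu_e}=R$). A class $\alpha$ extends over this local model if and only if its restriction to $\mathrm{Frac}(R')$ is unramified at the closed point of $\Spec R'$ --- the descent of such an extension down the $\mu_e$-cover to the quotient stack being a separate and more delicate point --- and, by compatibility of residues with the ramified base change $\mathrm{Frac}(R')/k(X)^G$ (the residue is multiplied by the ramification index when the residue field does not grow), that residue equals $e\cdot\partial^2_{D'}(\alpha)$. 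Thus the condition at $D'$ is precisely $|I_D|\cdot\partial^2_{D'}(\alpha)=0$, i.e. $\alpha\in\ker\!\big(|I_D|\cdot\partial^2_{D'}\big)$; for $|I_D|=1$ this recovers the classical condition $\partial^2_{D'}(\alpha)=0$. Intersecting over all $D$ yields~\eqref{Brstack}.

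\textbf{The step I expect to be the main obstacle} is the local computation above: establishing the root-stack (equivalently, local $\mu_e$-quotient) model of $[X/G]$ at the generic point of a ramification divisor and, especially, computing the Brauer group of this model --- which requires running the descent spectral sequence for the cover $\Spec R'\to[\Spec R'/\mu_e]$ (the local analogue of~\eqref{ss}), controlling its $\mu_e$-cohomology contributions, and carefully tracking the scaling of the residue by the ramification index so as to extract the exact multiplier $|I_D|$. This is the technical core carried out in~\cite{KT22,KT24}. Once it is in place, the explicit residue formulas~\eqref{eqn:valeq} make the condition $\alpha\in\ker(|I_D|\cdot\partial^2_{D'})$ directly computable on symbol classes, which is what we exploit in the examples below.
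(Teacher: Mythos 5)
Your outline is correct and follows essentially the same route as the paper, which itself gives no argument but defers entirely to \cite{KT19,KT22,KT24}: injectivity into $\rH^2(k(X)^G,\bZ/n\bZ)$, purity over codimension-one points of the Deligne--Mumford stack, identification of the local model at the generic point of a ramification divisor with an $|I_D|$-th root stack (using that $I_D$ is cyclic, acting faithfully on the normal line), and the scaling of the residue by the ramification index. The one genuinely delicate point --- that a class unramified on the totally ramified cyclic cover $\Spec R'$ actually descends to the quotient stack, i.e.\ the computation of $\Br$ of the local root-stack model --- is exactly the content of \cite[Proposition 2.2]{KT19}, and you have correctly isolated it as the step that cannot be waved through.
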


\begin{proof}
See \cite[Proposition 4.2]{KT22}, \cite[Section 4]{KT24}, and \cite[Proposition 2.2]{KT19}. 
\end{proof}

\subsection{Rational surfaces}
In this subsection, we review an effective algorithm provided in \cite{KT22} to compute $\Br([X/G])$   when $X$ is a rational surface. 

Let $X$ be a smooth projective rational surface carrying a generically free regular action of a finite group $G.$ Recall that the $G$-action on $X$ is called {\em in standard form } if there exists a $G$-invariant simple normal crossing divisor $\cD\subset X$ such that
\begin{itemize}
    \item the $G$-action on $X\setminus \cD$ is free, and
    \item for any $g\in G$ and any irreducible component $D$ of $\cD$, either $g(D)=D$ or $g(D)\cap D=\emptyset$.   
\end{itemize}
Any $G$-action on $X$ can be brought into standard form via successive blowups \cite[Theorem 3.2]{RYessential}. The following proposition provides an effective algorithm only involving divisors with nontrivial stabilizers to determine $\Br([X/G])$.

\begin{prop}[{\cite[Proposition 4.2, Corollary 4.6]{KT22}}]\label{prop:ratsurf}
    Let $X$ be a smooth projective rational surface carrying a finite group $G$-action in standard form. Then the group $\Br([X/G])[n]$ can be identified with the kernel of the map 
    \begin{align}\label{eqn:surfaceseq}
          \bigoplus_{[\xi]\in X^{(1)}/G}\rH^1(\Spec(k(\xi)^{D_\xi}),\bZ/n\bZ)\stackrel{\oplus \partial^1}{\longrightarrow}\bigoplus_{[\mathfrak p]\in X/G}\bZ/n\bZ,       
    \end{align}
    where the sum on the left runs over $G$-orbit representatives $[\xi]$ of codimension 1 points on $X$ such that the stabilizer group $I_\xi$ has cardinality $n$,
    $$
    D_\xi:=\{g\in G\mid \xi\cdot g=\xi\},
    $$
    and the sum on the right runs over $G$-orbit representatives $[\mathfrak p]$ of points of $X$.
\end{prop}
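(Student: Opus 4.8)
The strategy is to derive \eqref{eqn:surfaceseq} from the general residue description of $\Br([X/G])[n]$ in Proposition~\ref{remres} by exploiting the hypothesis that the $G$-action is in standard form. First I would recall that, since $k(X)^G = k(X/G)$ and the divisorial valuations on $k(X)^G$ are exactly those coming from prime divisors $D'$ on a smooth projective model of $X/G$, the intersection $\bigcap_D \ker(|I_D|\cdot\partial^2_{D'})$ can be reorganized over $G$-orbits of prime divisors of $X$: if $D$ has stabilizer $I_D$ of order $m$, then $|I_D|\cdot\partial^2_{D'} = m\cdot\partial^2_{D'}$, so this term imposes a condition only on the $m$-torsion part, and is automatically satisfied on classes whose residue at $D'$ is already killed by $m$. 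For $n$-torsion classes, the binding conditions come precisely from divisors $D$ with $n \mid |I_D|$; and since in standard form the stabilizers of divisors are cyclic (the stabilizer of a divisor in a $G$-action in standard form acts faithfully on the generic normal direction, hence embeds in $k^\times$), one can stratify by the order of $I_\xi$. The upshot is an identification of $\Br([X/G])[n]$ with a subquotient of $\rH^2(k(X)^G,\bZ/n\bZ)$ controlled entirely by residues along divisors with stabilizer of order exactly $n$ (after accounting for the weaker divisibility conditions coming from smaller stabilizers, which, as in \cite{KT22}, can be absorbed).

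Next I would bring in the description of $\rH^2(k(X)^G,\bZ/n\bZ)$ via the Bloch–Ogus/Gersten complex for the smooth surface $X/G$ (or a smooth projective model thereof): there is an exact sequence relating $\rH^2$ of the function field to a direct sum over codimension-$1$ points of $\rH^1$ of their residue fields, and then to a sum over codimension-$2$ points (closed points) of $\rH^0 = \bZ/n\bZ$ via the degree-$1$ residue maps $\partial^1$. Concretely, the purity/Gersten sequence for $\mu_n$ on the surface gives
\[
0 \to \Br(X/G)[n] \to \rH^2(k(X/G),\bZ/n\bZ) \to \bigoplus_{D'}\rH^1(k(D'),\bZ/n\bZ) \xrightarrow{\oplus\partial^1} \bigoplus_{\mathfrak p'}\bZ/n\bZ,
\]
but what we want is not $\Br(X/G)[n]$ — rather we want the classes whose residues, after the stabilizer-twisted vanishing conditions, land in the appropriate $G$-fixed subgroups. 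The residue field of $D'$ is $k(\xi)$ for $\xi$ a divisor on $X$ over $D'$, and the condition that a class extends across the stabilizer stack along $D'$ forces its residue to lie in the image of $\rH^1(\Spec k(\xi)^{D_\xi},\bZ/n\bZ) \hookrightarrow \rH^1(\Spec k(\xi),\bZ/n\bZ)$ — this is precisely the appearance of the invariants under $D_\xi$, the decomposition group of $\xi$, which permutes the branches and acts on the residue field. Matching the second residue $\partial^1$ into $\bigoplus_{[\mathfrak p]}\bZ/n\bZ$ over $G$-orbits of points of $X$ then gives the claimed complex \eqref{eqn:surfaceseq}, and $\Br([X/G])[n]$ is identified with its kernel.

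The main obstacle — and the step requiring the most care — is the bookkeeping of \emph{stabilizers versus decomposition groups} and the translation of the divisibility-twisted conditions $|I_D|\cdot\partial^2_{D'}=0$ into the clean statement that only divisors with $|I_\xi| = n$ contribute, with residues valued in $D_\xi$-invariants. One must check that the standard-form hypothesis makes the local picture along each $D'$ a cyclic quotient singularity/branch configuration for which the relevant local Brauer/residue computation is transparent, and that the contributions of divisors with $|I_D| < n$ (imposing only a partial divisibility condition) and with $|I_D| > n$ (imposing a vacuous condition on $n$-torsion, or one already implied) combine correctly so that no extra constraints or redundancies are introduced. Here I would lean on the local computations in \cite[Proposition 4.2]{KT22} and \cite[Proposition 2.2]{KT19}, which handle exactly the cyclic stabilizer case, and on the fact (from \cite[Theorem 3.2]{RYessential}) that standard form guarantees SNC $G$-invariant divisorial support so that all intersection points are toric and the residue maps factor through the expected local models. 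The remaining verification — that the two outgoing maps in \eqref{eqn:surfaceseq} assemble into the $\partial^1$ of the Gersten complex on $X/G$ restricted to the relevant strata — is then a diagram chase, routine modulo the identification of residue fields $k(\xi)^{D_\xi} = k(D')^{\mathrm{sep}}$-type statements already implicit in Proposition~\ref{remres}.
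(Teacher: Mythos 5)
The first thing to say is that the paper does not actually prove this proposition: it is imported verbatim from \cite[Proposition 4.2, Corollary 4.6]{KT22}, so there is no in-paper argument to measure your sketch against, and a sketch that ultimately ``leans on the local computations in \cite{KT22}'' is circular as a proof of the statement. That said, your architecture is the right one and surely close to the source: combine Proposition~\ref{remres} with the Gersten/Bloch--Ogus complex of a smooth projective model $S$ of $X/G$, use that $S$ is rational so that $\Br(S)[n]=0$ (injectivity of the residue map on $\Br([X/G])[n]$) and $\rH^3(S,\mu_n)=0$ (exactness at the $\bigoplus_{D'}\rH^1$ term, hence surjectivity onto $\ker(\oplus\partial^1)$), and identify $\kappa(D')=k(\xi)^{D_\xi}$ because $D_\xi/I_\xi$ is the Galois group of $k(\xi)$ over $\kappa(D')$. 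Two imprecisions worth fixing: the $D_\xi$-invariants are not a ``condition forced on the residue'' --- the residue of a class in $\rH^2(k(X)^G,\bZ/n\bZ)$ along $D'$ simply lives in $\rH^1(\kappa(D'),\bZ/n\bZ)$, and the invariants appear only through the identification of residue fields; and the rationality inputs ($\Br(S)=0$, $\rH^3(S,\mu_n)=0$) should be made explicit rather than buried inside ``the purity/Gersten sequence.''

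The genuine gap is the step you explicitly defer: the claim that the conditions $|I_D|\cdot\partial^2_{D'}=0$ for divisors with $|I_D|\neq n$ ``combine correctly so that no extra constraints or redundancies are introduced.'' This is not routine bookkeeping; it is exactly where the statement, read literally, breaks. A divisor $\xi$ with $|I_\xi|=m$ where $\gcd(m,n)$ is strictly between $1$ and $n$, or where $n$ is a proper divisor of $m$, does not appear in the left-hand sum of \eqref{eqn:surfaceseq}, yet the condition it imposes on an $n$-torsion class is only that the residue be killed by $\gcd(m,n)$ --- which is far from vanishing. Concretely, in case $\fbox{2.4}$ of Section~\ref{dim2} the unique branch divisor has $I_\xi=C_4$ and $\Br([X/G])=(\bZ/4\bZ)^2$, whose $2$-torsion is $(\bZ/2\bZ)^2$; but for $n=2$ the left-hand sum in \eqref{eqn:surfaceseq} is empty, so the literal formula would give $\Br([X/G])[2]=0$. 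The identification therefore only holds with $n$ chosen compatibly with the stabilizer orders (as in all the applications in the paper), or after replacing the condition $|I_\xi|=n$ by $n\mid |I_\xi|$ (equivalently, summing $\rH^1(\Spec(k(\xi)^{D_\xi}),\bZ/\gcd(|I_\xi|,n)\bZ)$ over all $\xi$ with nontrivial stabilizer). Any complete proof must carry out precisely this gcd bookkeeping; your proposal identifies it as the crux and then outsources it to the result being proved, so the essential content remains unverified.
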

 Using Proposition~\ref{prop:ratsurf}, we compute $\Br([X/G])$ for all $G$-minimal del Pezzo surfaces with abelian groups $G$ in Section~\ref{dim2}. Here we introduce the notation and demonstrate the process in detail with a concrete example. 
\begin{exam}\label{exam:cubexam}
     We consider the case $\fbox{3.36}$ in \cite{B06}. Let $X$ be a smooth cubic surface given by 
     $$
     \{w^3+x^3+xy^2+z^3=0\}\subset\bP^3_{w,x,y,z},
     $$
         with an action of $G=C_3\times C_6$ generated by 
         \begin{align*}
             \sigma\colon&  (w,x,y,z)\mapsto(\zeta_3 w,x,y,z),\\
            \tau\colon& (w,x,y,z)\mapsto (w,x,-y,\zeta_3 z).
         \end{align*}
  We list the stratification of curves with nontrivial stabilizers
$$
    \centering
    \begin{tabular}{c|c|c|c|c|c}
    $i$&Curve $\xi_i$&$I_{\xi_i}$&$D_{\xi_i}$&$\rg(\xi_i)$&$\rg(\xi_i/D_{\xi_i})$\\\hline
    1&$\{y=0\}\cap X $ &  $C_2=\langle\tau^3\rangle$ &$G$ & 1&0\\\hline
    2&$ \{w=0\}\cap X$& $C_3=\langle\sigma\rangle$&$G$& 1&0 \\\hline
      3&$\{z=0\}\cap X$& $C_3=\langle\tau^2\rangle$&$G$&1&0
    \end{tabular}
$$
where 
\begin{itemize}
    \item the second column displays equations of the curves $\xi_i$ with nontrivial generic stabilizers;
    \item the third column displays the stabilizers $I_{\xi_i}$ of $\xi_i$;
    \item the fourth column displays the groups $D_{\xi_i}=\{g\in G\mid \xi_i\cdot g=\xi_i\}$;
    \item the fifth column displays the genera of $\xi_i$;
    \item the sixth column displays the genera of $\xi_i/D_{\xi_i}$, computed via the Riemann-Hurwitz formula.
    \end{itemize}
    
 One can check that the $G$-action on $X$ is in standard form: the $G$-action is free in the complement of the simple normal crossing divisor $\xi_1\cup\xi_2\cup\xi_3$ in $X$, and $G$ leaves invariant each of the curves $\xi_i$ for $i=1,2,3.$ By Proposition \ref{prop:ratsurf}, we know that $\Br([X/G])[n]$ is possibly nonzero only when $n=2$ or $3$.

For $n=3,$ observe that $\xi_2\cap\xi_3$ consists of three points where two of them are in the same $G$-orbit. It follows that the quotient curves $\xi_2/G$ and $\xi_3/G$ are both rational and meet at two points, denoted by $p_1$ and $p_2$. The kernel of \eqref{eqn:surfaceseq} consists of classes of functions ramified only at $p_1$ and $p_2$, where the ramification indices are distinct at two points. It follows that 
$$
\Br([X/G])[3]=\bZ/3.
$$ 

For $n=2,$ the quotient curve $\xi_1/G$ has genus $0.$ Then 
$$
\ker\left(\rH^1(\Spec(k(\xi_1))^G,\bZ/2\bZ)\to\bigoplus_{[\mathfrak p]\in X/G}\bZ/2\bZ\right)=0
$$
(see \cite[Proposition 4.2.1(b)]{CT}).
Combining the $2$-torsion and $3$-torsion subgroups, we conclude that
$$
\Br([X/G])=\bZ/3\bZ.
$$
\end{exam}

\section{A cubic threefold} \label{dim3}
Let $k$ be an algebraically closed field of characteristic zero, and $X$ the cubic threefold given by 
\begin{align*}
   X=\{F=0\}\subset\bP^4_{y_1,\ldots,y_5},
\end{align*}
where 
\begin{align*}
    F&=y_3^2(y_1-y_4)+y_5^2(y_1+y_2)-2y_1y_3y_5+f,\\
    f&=-y_1^2y_2-y_1y_2^2+y_1^2y_4-y_1y_4^2+y_2^2y_4-y_2y_4^2-2y_1y_2y_4.
\end{align*}
Let $G=C_2$ act on $X$  by 
$$
(y_1,y_2,y_3,y_4,y_5)\mapsto (y_1,y_2,-y_3,y_4,-y_5).
$$
The singular locus of $X$ consists of six ordinary double points:
$$
 p_1=[0 : 0 : 0 : 1 : -1],\quad p_2=[0 : 0 : 0 : 1 : 1],\quad p_3=[1 : 0 : -1 : 0 : -1],
 $$
 $$
 p_4=[1 : 0 : 1 :
0 : 1],\quad p_5=[0 : 1 : -1 : 0 : 0],\quad p_6=[0 : 1 : 1 : 0 : 0].
$$
Let $\tilde X$ be the blowup of $X$ at $p_1,\ldots,p_6$ and the $G$-invariant curve given by 
$$
\{y_3=y_5=0\}\cap X.
$$ 
Since $p_1,\ldots,p_6$ are ordinary double points, $\tilde X$ is smooth.
\begin{prop}\label{exd3}
In the above notation, the group
$
\Br([\tilde X/G])
$
is nonzero.
More precisely, the class
$$\alpha=\left(\frac{f}{(y_2-y_4)^3}, \frac{-y_1y_4+y_1y_2-y_2y_4}{(y_2-y_4)^2}\right)\in\rH^2(k(X)^G,\bZ/2\bZ)$$
is nontrivial, and belongs to $\Br([\tilde X/G]).$
In particular, the $G$-action on $X$ is not linearizable.

\end{prop}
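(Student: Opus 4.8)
The plan is to exploit the conic‑bundle structure of $X$. Writing $g:=y_1y_2-y_1y_4-y_2y_4$, both slots of $\alpha$ involve only $y_1,y_2,y_4$, hence lie in $K:=k(\bP^2_{y_1,y_2,y_4})$, and since $G$ fixes $y_1,y_2,y_4$ the linear projection $\pi\colon X\dashrightarrow\bP^2_{y_1,y_2,y_4}$ factors through $[X/G]$; thus $\alpha$ is the image under $K\hookrightarrow k(X)^G$ of the explicit symbol $\beta:=\bigl(\tfrac{f}{(y_2-y_4)^3},\tfrac{g}{(y_2-y_4)^2}\bigr)\in\Br(K)[2]$, which in particular shows $\alpha$ is well defined in $\rH^2(k(X)^G,\bZ/2)$. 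The generic fibre of $\pi$ is the conic $C\colon\{W^2+gv^2+f(y_1-y_4)\lambda^2=0\}$ over $K$, and, using $g=(y_1-y_4)(y_2-y_4)-y_4^2$ and $\sqrt{-1}\in k$, a quaternion identity identifies $\beta$ with the class of $C$. From the residue formula \eqref{eqn:valeq} one checks that $\beta$ is unramified on $\bP^2$ away from the cubic $D_f:=\{f=0\}$: the other candidate components $\{g=0\}$ and $\{y_2=y_4\}$ give square residues because $g|_{\{y_2=y_4\}}=-y_4^2$ and $f|_{\{g=0\}}=-5\,y_1^2y_2^2/(y_1+y_2)$, while $\partial^2_{D_f}(\beta)=\overline g\in\kappa(D_f)^\times/2$.

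To prove $\alpha\in\Br([\tilde X/G])$ I apply Proposition~\ref{remres}. Since $G\simeq C_2$, divisors of $\tilde X$ with stabiliser of order $2$ impose no condition ($2\cdot\partial^2_{D'}$ kills a $2$‑torsion group), so it suffices to verify $\partial^2_{D'}(\alpha)=0$ for every prime divisor $D\subset\tilde X$ with trivial stabiliser. Because $\alpha=\pi^*\beta$ and $\beta$ is unramified over $\bP^2\setminus D_f$, the only divisors that can contribute are those lying over $D_f$: the strict transform $\tilde D$ of $\pi^{-1}(D_f)$, the exceptional divisor $E_0$ over $C_0=\{y_3=y_5=0\}\cap X$, and the six exceptional divisors $E_i$ over the nodes $p_i$, whose images in $\bP^2$ are the three coordinate points, all on $D_f$. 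On $E_0$ the $G$‑action is trivial (it negates the two normal directions, hence acts as $-1$ on the normal bundle), so its stabiliser is all of $G$ and nothing is required. Over the generic point of $D_f$ the conic $C$ degenerates into two lines conjugate over $\kappa(D_f)(\sqrt g)$ which $G$ preserves individually, so $\sqrt g\in\kappa(\tilde D')$, the residue $\overline g$ becomes a square, and $\partial^2_{\tilde D'}(\alpha)=0$. For each $E_i$, a local computation at $p_i$ shows that $f$, $g$ and $y_2-y_4$ have, up to sign, the same linear leading form at the corresponding coordinate point, so the relevant value of $\alpha$ on $E_i$ is $\overline{-1}$, a square. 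Hence $\alpha\in\Br([\tilde X/G])[2]$.

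For non‑triviality I compute the quotient conic $C'=C/G$: passing to the invariant subring of $k[W,v,\lambda]$ and diagonalising, $C'$ is $K$‑isomorphic to $\{g\,u^2+v^2-\tfrac{f^2(y_1-y_4)^2}{4g}\,w^2=0\}$, whose class is $\bigl(-g,\ \bigl(\tfrac{f(y_1-y_4)}{2}\bigr)^2\bigr)=0$. Therefore $C'\simeq\bP^1_K$, so $k(X)^G=k(C')$ is a purely transcendental extension of $K$ and $\Br(K)\hookrightarrow\Br(k(X)^G)$; it remains to show $\beta\neq0$, i.e.\ that $g/(y_2-y_4)^2$ restricts to a non‑square in $\kappa(D_f)$. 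The cubic $D_f$ is smooth, and (by the same leading‑form computations) $D_f$ meets $\{g=0\}$ with multiplicity $2$ at each of $[1:0:0],[0:1:0],[0:0:1]$ and nowhere else, while $\{y_2=y_4\}\cap D_f=2[1:0:0]+[0:1:1]$ and $\{y_1=0\}\cap D_f=[0:1:0]+[0:0:1]+[0:1:1]$. Thus $g/(y_2-y_4)^2|_{D_f}$ is a square iff $[0:1:0]+[0:0:1]\sim[1:0:0]+[0:1:1]$ on $D_f$, which fails because the residual third points of the two relevant secant lines ($[0:1:1]$ and $[1:0:0]$ respectively) are distinct. Hence $\beta\neq0$, so $\alpha\neq0$; and since $\Br([\tilde X/G])$ is a $G$‑stable‑birational invariant that vanishes for linear $C_2$‑actions (in \eqref{ss}, $\rH^2(C_2,k^\times)=0$ and $\rH^1(C_2,\Pic(\bP^n))=0$), the $G$‑action on $X$ is not linearizable.

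I expect the main obstacle to be the bookkeeping in the second paragraph: controlling the local geometry of $X$ at its six ordinary double points and of $\pi$ near them, and confirming that the blow‑up $\tilde X$ produces no further divisor with trivial stabiliser carrying a nonzero residue — in particular nothing coming from the indeterminacy locus $L_0=\{y_1=y_2=y_4=0\}$ of $\pi$. By contrast, once the tangency pattern of $D_f$ and $\{g=0\}$ at the coordinate points is established, the splitting $C'\simeq\bP^1_K$ and the non‑square verification are comparatively direct.
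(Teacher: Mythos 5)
Your proof is correct, but it takes a genuinely different route from the paper's. The paper verifies condition \eqref{condnr} by direct computation in explicit affine charts of the blowups, checking the four candidate divisors in the chart of $\Bl_{\{y_3=y_5=0\}}X$ and the three orbits of exceptional divisors over the nodes, and it certifies non-triviality by exhibiting a nonzero residue at the image of the exceptional divisor over $\{y_3=y_5=0\}\cap X$ (permitted there since that divisor has stabilizer of order $2$). You instead exploit the $G$-equivariant conic-bundle structure $\pi\colon X\dashrightarrow\bP^2_{y_1,y_2,y_4}$: both entries of $\alpha$ lie in $K=k(\bP^2)$, so $\alpha$ is pulled back from $\beta\in\Br(K)[2]$, whose only nontrivial residue on $\bP^2$ is $\overline q$ along the smooth cubic $D_f$ (your computations along $\{q=0\}$, using $f|_{q=0}=-5y_1^2y_2^2/(y_1+y_2)$, and along $\{y_2=y_4\}$, using $q|_{y_2=y_4}=-y_4^2$, both check out, as does the tangency of $\{q=0\}$ and $D_f$ at the three coordinate points). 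This reduces the unramifiedness check to the divisors of $\tilde X$ centered over $D_f$ --- the strict transform of $X\cap\{f=0\}$, where $\sqrt q$ lies in the residue field of the image because $G$ preserves each of the two conjugate lines; $E_0$, with stabilizer $G$; and $E_1,\dots,E_6$ --- and these agree with the paper's chart computations. Your non-triviality argument is also different and valid: the quotient conic splits, so $k(X)^G=K(t)$ is purely transcendental over $K$, $\Br(K)\hookrightarrow\Br(k(X)^G)$, and $\overline q$ is a nonsquare in $\kappa(D_f)$ because $[0:1:0]+[0:0:1]\not\sim[1:0:0]+[0:1:1]$ on the elliptic curve $D_f$; this is equivalent to, but packaged differently from, the paper's parametrization $\kappa(D_f)=k(q)(\sqrt{q(-4q^2-8q-20)})$. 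What your approach buys is a conceptual explanation of where $\alpha$ comes from (a conic bundle ramified along $D_f$) and a chart-free checklist; what the paper's buys is self-containedness (no appeal to purity on $\bP^2$, Faddeev injectivity, or the splitting of the quotient conic). Two small points of care: the reduction step should explicitly note that no divisor of $X$ is contracted by $\pi$ to a point (the fibers are plane cubics), so the only divisors centered over $D_f$ are the ones you list; and your claim that $f$, $q$ and $y_2-y_4$ share a linear leading form up to sign holds literally only at $[1:0:0]$ --- at $[0:0:1]$ and $[0:1:0]$ the function $y_2-y_4$ is a unit --- though the conclusion that each residue at $E_i$ is $-1$ times a square is correct in all three cases.
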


\bigskip

The rest of this section is devoted to the proof of Proposition \ref{exd3}.  
First, let 
$$
q=-y_1y_4+y_1y_2-y_2y_4
$$ and
\begin{align}\label{eqn:alpha6a1}
    \alpha=(a,b)\in\rH^2(k(X)^G,\bZ/2\bZ),
\end{align}
where 
$$
 a:=\frac{f}{(y_2-y_4)^3}, \quad b:=\frac{q}{(y_2-y_4)^2}.
$$

Since $\tilde X$ is smooth and projective, by Proposition \ref{remres}, it suffices to check that for all divisorial valuations on $k(\tilde X)^G$ given by the image $D'$ of a divisor $D$ on $\tilde X$, we have
\begin{align}\label{condnr}
|I_{D}|\cdot\partial^2_{D'}(\alpha)=0.
\end{align}

Note that no singular point of $X$ lies on the plane section of $X$ given by $y_3=y_5=0$. In particular, we may blow up the singular points and the curve $\{y_3=y_5=0\}\cap X$ independently.

\subsection*{Blowup of $y_3=y_5=0$}

The blowup $Y$ of $X$ along the cubic curve 
$$
\{y_3=y_5=0\}\cap X
$$ 
is given by 
$$
Y=\{F=y_3z_5-y_5z_3=0\}\subset\bP^4_{y_1,\ldots,y_5}\times\bP^1_{z_3,z_5}.
$$
We first compute residues in the affine chart $U$ of $Y$ given by $y_4=z_3=1.$
Put
$$
g=y_1-y_4+y_1z_5^2+y_2z_5^2-2y_1z_5.
$$ 
Then $y_5=y_3z_5$ and the equation of $U$ is 
$$
U: y_3^2\bar g +\bar f=0,
$$
where
$\bar g$ (resp. $\bar f$) is the affine equation of $g$ (resp. $f$) in the chart $y_4=1$.
The action of $G$ on $U$ is
$$
(y_1, y_2, y_3, z_5)\mapsto (y_1, y_2, -y_3, z_5).
$$
Then $U/G$ is the affine rational variety  
\begin{equation}\label{eqUG}
U/G\subset \mathbb A^4_{y_1, y_2, w_3, z_5}, \quad w_3\bar g +\bar f=0.
\end{equation}
In $k(U)^G$, one has $\bar{f}=-w_3\bar{g}$, and we rewrite
$$
\alpha=\left(\frac{-w_3\bar g}{(y_2-1)^3}, \frac{-y_1+y_1y_2-y_2}{(y_2-1)^2}\right)\in\rH^2(k(U)^G,\bZ/2)=\rH^2(k(X)^G,\bZ/2).
$$

We then compute residues of $\alpha$ at divisors of $U/G\simeq \mathbb A^3_{y_1, y_2, z_5}.$ From the definition of $\alpha$, we know that $\partial_{D'}(\alpha)=0$ for any divisor $D'$ of $U/G$ except possibly when $D'$ is one of the following four divisors:
$$D'_1: w_3=0, \quad D'_2: \bar g=0, \quad D'_3: y_2-1=0, \quad D'_4: -y_1+y_1y_2-y_2=0.$$
We consider these four cases:
\begin{enumerate}
\item $D'_1$ is the image of the divisor $D_1: y_3=0$ on $U$, with $|I_{D_1}|=2$. Hence condition (\ref{condnr}) is satisfied for $D_1$.
We claim that
\begin{equation}\label{nzero}
\partial_{D'_1}(\alpha)=q=-y_1+y_1y_2-y_2\neq 0\text{ in } \kappa(D'_1)^{\times}/(\kappa(D'_1)^{\times })^2.
\end{equation}
Indeed, at the generic point of $D'_1$, the function $\bar{g}$ is invertible. Hence $\kappa(D'_1)$ is the field of functions of the subscheme 
$$
\{\bar f=0\}\subset \mathbb A^3_{y_1, y_2, z_5},
$$
which is a purely transcendental extension of the function field of the cubic curve $$C: \bar f = -y_1^2y_2-y_1y_2^2+y_1^2-y_1+y_2^2-y_2-2y_1y_2=0\subset \mathbb A^2_{y_1, y_2}.$$
Since the function $y_2-1$ is invertible at the generic point of $C$, we may write:
$$
\bar f(y_1, y_2)=\bar f\left(\frac{q+y_2}{y_2-1}, y_2\right)=\frac{y_2^2(-q-5)-4qy_2-q^2-q}{y_2-1}.
$$ 
The discriminant of 
$$
y_2^2(-q-5)-4qy_2-q^2-q
$$ 
as a quadratic polynomial in the variable $y_2$ over $k(q)$ is
$$
d=q(-4q^2-8q-20)
$$ 
with $-4q^2-8q-20$ being a nonsquare in $k(q)$.
We obtain:
$$\kappa(D_1)=k(q)(\sqrt{d})(z_5).$$
Hence the kernel of the natural map $$k(q)^{\times}/(k(q)^{\times})^2\to \kappa(D'_1)^{\times}/(\kappa(D'_1)^{\times})^2$$ is generated by $d$, so that $q$ is a nonzero element in $\kappa(D'_1)^{\times}/(\kappa(D'_1)^{\times})^2.$

\item $\partial_{D'_2}(\alpha)$ is the image of $-y_1+y_1y_2-y_2$ in $\kappa(D'_2)^{\times}/(\kappa(D'_2)^{\times})^2.$
By the definition of $D'_2$, we have a relation 
$$y_1-1+y_1z_5^2+y_2z_5^2-2y_1z_5=0$$ in $\kappa(D'_2)$, where we still write $y_1, y_2, z_5$ for their images in $\kappa(D'_2)^{\times}$. We rewrite this condition as: 
$$(y_1+y_2)(z_5-\frac{y_1}{y_1+y_2})^2+\frac{-y_1+y_1y_2-y_2}{y_1+y_2}=0,$$ so that  $$-y_1+y_1y_2-y_2=-(y_1+y_2)^2(z_5-\frac{y_1}{y_1+y_2})^2$$ is a square in $\kappa(D'_2)^{\times}$ and $\partial_{D'_2}(\alpha)=0.$
\item $\partial_{D'_3}(\alpha)=(-y_1+y_1y_2-y_2)^3|_{y_2=1}=-1$ is a square in $\kappa(D'_3)^{\times}$. So we have that $\partial_{D'_3}(\alpha)=0.$
\item $\partial_{D'_4}(\alpha)$ is the image of $\frac{\bar f}{(y_2-1)^3}$ in $\kappa(D'_4)^{\times}/(\kappa(D'_4)^{\times })^2.$ In the field $\kappa(D'_4)$, 
we have a relation 
$$
y_1=\frac{y_2}{y_2-1}.
$$ 
Then we find that in $\kappa(D'_4)$,
$$\frac{\bar f}{(y_2-1)^3}=-\frac{5y_2^2}{(y_2-1)^4}$$ is a square, hence $\partial_{D'_4}(\alpha)=0.$
\end{enumerate}

The computations in the remaining charts $y_1=z_3=1,$ $y_2=z_3=1,$ and  $y_i=z_5=1, i=1,2,4$ of $Y$ are similar. 
Hence we have verified that the condition \eqref{condnr} holds for all divisors on $\tilde X/G$ except the images of the exceptional divisors of the blowups of six singular points.

\subsection*{Exceptional divisors of $\Bl_{p_1,p_2}(X)$} Let $v$ be the valuation on $k(X)^G$ corresponding to the exceptional divisor of the blowup of $X$ at the $G$-orbit of two singular points $p_1$ and $p_2$.
We work with the affine chart $y_4=1$. Put 
$$
g_1=y_1,\quad g_2=y_2,\quad g_3=y_3,\quad g_4=y_5^2-1, 
$$
one may view the union of $p_1$ and $p_2$ as a variety given by 
$$
\{g_1=g_2=g_3=g_4=0\}\subset\bA^4_{y_1,y_2,y_3,y_5}.
$$
The blowup $\Bl_{p_1,p_2}(\bA^4)$ is given by 
$$
\{g_iz_j-g_jz_i=0 \ \vert\ i,j=1,\ldots,4\}\subset\bA^4_{y_1,y_2,y_3,y_5}\times\bP^3_{z_1,z_2,z_3,z_4}.
$$
The induced $G$-action is given by 
$$
(y_1,y_2,y_3,y_5)\times (z_1,z_2,z_3,z_4)\mapsto(y_1,y_2,-y_3,-y_5)\times (z_1,z_2,-z_3,z_4).
$$
In the affine chart $z_4=1$, the defining equations are equivalent to the change of variables 
\begin{align}\label{eqn:changevarp1p2}
    y_i=z_i(y_5^2-1),\quad i=1,2,3.
\end{align}
The exceptional divisor $E$ is given by $y_5^2=1$. Note that it consists of two components in the same $G$-orbit. So it gives rise to a divisorial valuation $v$ of $k(X)^G$. Since $a,b\in k(X)^G$, after substituting \eqref{eqn:changevarp1p2} into \eqref{eqn:alpha6a1}, one can compute
\begin{align}
    \label{eqn:nualphap1p2}
    v\left(\frac{f}{(y_2-y_4)^3}\right)=v\left(\frac{q}{(y_2-y_4)^2}\right)=1.
\end{align}
From \eqref{eqn:valeq}, one has
\begin{align*}
\partial_v(\alpha)=\frac{f}{q(z_2(y_5^2-1)-1)}=-1\in \kappa(v)^{\times}/(\kappa(v)^{\times})^2
\end{align*}
where the second equality is obtained via evaluation at $y_5^2=1$. 
It follows from \eqref{eqn:nualphap1p2} that $\partial_v(a)=0.$

The computations of the residue of $\alpha$ along exceptional divisors of blowups of the other two $G$-orbits of singular points are similar. We summarize them below. 

\subsection*{Exceptional divisors of $\Bl_{p_3,p_4}(X)$}
Let $v$ be the valuation on $k(X)^G$ corresponding to the exceptional divisors of $\Bl_{p_3,p_4}(X)$. Similarly as \eqref{eqn:changevarp1p2}, after choosing an appropriate affine chart and introducing new coordinates $z_2, z_3, z_4$, the blowup of $\bP^4$ at $p_3$ and $p_4$ can be considered as the change of variables 
\begin{align*}
    y_1&=1,\quad 
&y_2=z_2(y_3^2-1),\\
y_4&=z_4(y_3^2-1),\quad
&y_5=-z_3(y_3^2-1)+y_3.
\end{align*}
Plugging this into \eqref{eqn:alpha6a1}, one can compute 
$$
v\left(\frac{f}{(y_2-y_4)^3}\right)=-2,\qquad v\left(\frac{q}{(y_2-y_4)^2}\right)=-1.
$$
It follows from \eqref{eqn:alpha6a1} that 
$$
\partial_v(\alpha)=-(z_2-z_4)^2
$$
is trivial in $\kappa(v)^{\times}/(\kappa(v)^{\times})^2$.

\subsection*{Exceptional divisors of $\Bl_{p_5,p_6}(X)$}
Let $v$ be the valuation on $k(X)^G$ corresponding to the exceptional divisors of $\Bl_{p_5,p_6}(X)$. Similarly as before, after choosing an appropriate affine chart and introducing new coordinates $z_1, z_4, z_5$, the blowup of $\bP^4$ at $p_5$ and $p_6$ can be considered as the change of variables 
\begin{align*}
   y_2=1,\qquad
y_i=z_i(y_3^2-1),\quad i=1,4,5.
\end{align*}
Plugging this into \eqref{eqn:alpha6a1}, one can compute 
$$
v\left(\frac{f}{(y_2-y_4)^3}\right)=v\left(\frac{q}{(y_2-y_4)^2}\right)=1.
$$
Then $
\partial_v(\alpha)
$
is obtained by evaluating
$$\frac{f}{q(1-z_4(y_3^2-1))}
$$
at $y^3-1=0$. After the above change of variables, this gives $
\partial_v(\alpha)=-1,
$
and thus we know $\partial_v(\alpha)=0$. 

\bigskip

In summary, condition \eqref{condnr}  is satisfied for all divisors on $\tilde X/G$, hence $\alpha$ defines an element of $\Br([\tilde X/G]).$ Moreover, it is nonzero since its residue at $w_3=0$ is nonzero by \eqref{nzero}. Since $G$ is a cyclic group, one has 
$$
\rH^2(G, k^{\times})=\rH^3(G, k^{\times})=0.
$$
The sequence \eqref{ss} implies that 
\begin{align}\label{eq:H1BR6a1}
    \rH^1(G, \Pic(\tilde X))=\Br([\tilde X/G])
\end{align}
so that the $G$-action on $X$ is not (stably) linearizable.

\begin{rema}\label{rema:connection}
For the $G$-action on $X$ given in Proposition~\ref{exd3}, it is also computed in \cite{CTZ} that 
\begin{align}\label{nontrivial}
    \rH^1(G,\Pic(\hat X))=\rH^1(G,\Cl(X))=\bZ/2\bZ
\end{align}
where $\hat X$ is the blowup of $X$ at $p_1,\ldots,p_6.$ 
In particular, the divisor class group $\Cl(X)$ of $X$ is generated by the class $F$ of a general hyperplane section on $X$ and two classes of rational normal cubic scrolls $S_1$ and $S_2$ in $X$ subject to the relation $S_1+S_2=2F$. The $G$-action switches $S_1$ and $S_2$, contributing to nontrivial cohomology \eqref{nontrivial}. 

Our computation above illustrates how to find nontrivial elements in the group $\Br([\tilde X/G])$ via residues in Galois cohomology, without using information of $\Pic(\hat X)$ as in \cite{CTZ} or the group-theoretic formulas as in \cite{KT24}. 

On the other hand, our computation reflects a striking similarity with the computation in \cite{CTZ}, making the equality \eqref{eq:H1BR6a1} explicit. Indeed, with the notation in Proposition~\ref{exd3}, the factor $q=-y_1y_4+y_1y_2-y_2y_4$ in $\alpha$ is a quadric section (equivalent to $2F$ in $\Cl(X)$) cutting out two cubic scrolls on $X$: 
$$
X\cap\{q=0\}=R_1+R_2,
$$
where 
\begin{align*}
   R_1=& \{q=y_2y_5+y_1(\sqrt{5}y_2-y_3+y_5)=y_3y_4-y_1(y_3-\sqrt{5}y_4-y_5)=0\},\\
   R_2=& \{q=y_2y_5-y_1(\sqrt{5}y_2+y_3-y_5)=y_3y_4-y_1(y_3+\sqrt{5}y_4-y_5)=0\}.
\end{align*}
One sees that $R_1$ and $R_2$ are two rational normal cubic scrolls, corresponding to the two classes $S_1$ and $S_2$ in $\Cl(X)$ which contribute to  \eqref{nontrivial}. 
\end{rema}

\section{Rational surfaces} \label{dim2}

Throughout this section, we work over $k=\bC$. Let $\Cr_2(\bC)$ be the plane Cremona group, i.e., the group of birational automorphisms over $\bC$ of $\bP^2$. Finite abelian subgroups of $\Cr_2(\bC)$ have been classified in \cite{B06}. We recall the basic settings. Let $G\subset \Cr_2(\bC)$ be a finite group. It is known that we can find a smooth projective surface $X$ with a regular {\em $G$-minimal} action on $X$ inducing the embedding $G\subset\Cr_2(\bC)$. Here a $G$-minimal action means one of the following two cases holds
\begin{enumerate}
    \item  $\Pic(X)^G=\bZ$ and $X$ is a del Pezzo surface;
     \item $\Pic(X)^G=\bZ^2$ and $X$ is a $G$-conic bundle.
\end{enumerate}
In the remainder of this section, we compute $\Br([X/G])$ in case (1), i.e., for $G$-minimal del Pezzo surfaces $X$ using Proposition~\ref{prop:ratsurf}. \ We focus on the cases when $G$ is a finite abelian group, and rely on a classification of such models, in particular the lists of groups and {\em regular} actions in \cite[Chapter 10]{B06}. We omit technical details of the computation and refer readers to Example~\ref{exam:cubexam} for an illustration of the computation process.  We keep the labels and notation as in {\it loc. cit.} In particular, 
\begin{itemize}
    \item $L_d(x_1,\ldots,x_n)$ denotes a general homogeneous form of degree $d$ in variables $x_1,\ldots, x_n$;
    \item $\zeta_n$ is a primitive $n$-th root of unity;
    \item $\lambda,\mu$ are general complex numbers.
\end{itemize}
\subsection{Cyclic groups}
 Note that for an action of a cyclic group $G$ on a smooth projective variety $X$, we have $\rH^2(G,\bC^\times)=0$. Then \eqref{ss} implies that 
$$
\Br([X/G])=\rH^1(G,\Pic(X)).
$$
Let $G\subset\Cr_2(\bC)$ be a cyclic group generated by an element $\sigma$ of order $n$. 
By \cite[Chapter 10.1]{B06}, we know that up to conjugation in $\Cr_2(\bC)$, the embedding $G\subset\Cr_2(\bC)$ is induced by a $G$-action on $X$ in one of the following cases:

\

\noindent$\bullet$ {\bf Linear automorphisms.}

\noindent$\fbox{0.n}$ $\,X=\bP^2$ and $\sigma$ acts via weights $(1,1,\zeta_n).$ One has $\Br([X/G])=0$ in these cases.

\

\noindent$\bullet$ {\bf Involutions.}
There are three types of involutions (elements of order 2)  in $\Cr_2(\bC)$, up to conjugation. They are:

\

\noindent$\fbox{C.2}$ {\em de Jonqui\`eres involutions}: $X$ is a conic bundle and the fixed locus is a hyperelliptic curve of genus $g>0.$ The model is in standard form. We have
$$
\Br([X/G])=(\bZ/2\bZ)^{2g}
$$
(see \cite{BP}).
\

\noindent$\fbox{2.G}$ {\em Geiser involutions}: 
The model is given by
$$
X=\{w^2=L_4(x,y,z)\}\subset\bP(2,1,1,1)_{w,x,y,z},
$$
$$
  G=\langle\sigma\rangle=C_2,\quad\sigma \colon(w,x,y,z)\mapsto (-w,x,y,z).
$$
The fixed curves stratification is given by
$$
    \centering
    \begin{tabular}{c|c|c|c|c|c|c}

$i$&Curve $\xi_i$& $I_{\xi_i}$&$D_{\xi_i}$&$\rg(\xi_i)$&$\rg(\xi_i/D_{\xi_i})$&Standard form\\\hline
    1&$ \{w=0\}$& $C_2=\langle\sigma\rangle$&$G$&3&3&yes\\\end{tabular}
$$
We have 
$$
\Br([X/G])=(\bZ/2\bZ)^{6}.
$$
   
\

\noindent$\fbox{1.B}$ {\em Bertini involutions}: The model is given by
$$
 X=\{w^2=z^3+L_2(x,y)z^2+L_4(x,y)z+L_6(x,y)\}\subset\bP(3,1,1,2)_{w,x,y,z},
$$
  $$ 
  G=\langle\sigma\rangle=C_2,\quad\sigma \colon(w,x,y,z)\mapsto (-w,x,y,z).
$$
The fixed curves stratification is
$$
    \centering
    \begin{tabular}{c|c|c|c|c|c|c}
$i$&Curve $\xi_i$& $I_{\xi_i}$&$D_{\xi_i}$&$\rg(\xi_i)$&$\rg(\xi_i/D_{\xi_i})$&Standard form\\\hline
    1&$ \{w=0\}$& $C_2=\langle\sigma\rangle$&$G$&4&4&yes\\
    \end{tabular}
$$
We have 
$$
\Br([X/G])=(\bZ/2\bZ)^{8}.
$$

\

\noindent$\bullet$ {\bf Roots of de Jonqui\`eres Involutions.}

\noindent$\fbox{C.ro.m}$ and $\fbox{C.re.m}$ $\,X$ is a conic bundle, $\sigma^m$ is a de Jonqui\`eres involution for some integer $m$ and $n=2m$. The only stratum with nontrivial stabilizer is a hyperelliptic curve $\xi$ of genus $g$ fixed by $\sigma^m$. The model $X\actsfromright G$ is in standard form and $\Br([X/G])$ has been computed in \cite[Section 5]{KT22}. The genus of the quotient curve $\xi/G$ depends on the number $s$ of fixed points of $\sigma$ on $\xi$. Note that $s$ can be 0, 2 or 4. Let $r=\frac{2g+2}{m}$,
we have
$$
\Br([X/G])=\begin{cases}
(\bZ/2\bZ)^{r-2}&\text{if } s=4,\\
(\bZ/2\bZ)^{r-1}&\text{if } s=2,\\
(\bZ/2\bZ)^{r}&\text{if } s=0.\\
\end{cases}
$$

\

\noindent$\bullet$ {\bf n=3.}

\noindent$\fbox{3.3}$ The model is given by
$$
 X=\{w^3=L_3(x,y,z)\}\subset\bP^3_{w,x,y,z},
$$
$$
 G=\langle\sigma\rangle=C_3,\quad \sigma \colon(w,x,y,z)\mapsto (\zeta_3w,x,y,z).
$$
The fixed curves stratification is
$$
    \centering
    \begin{tabular}{c|c|c|c|c|c|c}
$i$&Curve $\xi_i$& $I_{\xi_i}$&$D_{\xi_i}$&$\rg(\xi_i)$&$\rg(\xi_i/D_{\xi_i})$&Standard form\\\hline
    1&$ \{w=0\}$& $C_3=\langle\sigma\rangle$&$G$&1&1&yes\\
    \end{tabular}
$$
We have
$$
\Br([X/G])=(\bZ/3\bZ)^2.
$$

\

\noindent \fbox{1.$\rho$} The model is given by
$$
 X=\{w^2=z^3+L_6(x,y,z)\}\subset\bP(3,1,1,2)_{w,x,y,z},
$$
$$
 G=\langle\sigma\rangle=C_3,\quad \sigma \colon(w,x,y,z)\mapsto (w,x,y,\zeta_3z).
$$
The fixed curves stratification is
$$
    \centering
    \begin{tabular}{c|c|c|c|c|c|c}
$i$&Curve $\xi_i$& $I_{\xi_i}$&$D_{\xi_i}$&$\rg(\xi_i)$&$\rg(\xi_i/D_{\xi_i})$&Standard form\\\hline
    1&$ \{z=0\}$& $C_3=\langle\sigma\rangle$&$G$&2&2&yes\\
    \end{tabular}
$$
We have
$$
\Br([X/G])=(\bZ/3\bZ)^4.
$$

\

\noindent$\bullet$ {\bf n=4.}

\noindent$\fbox{2.4}$ The model is given by
$$
 X=\{w^2=L_4(x,y)+z^4\}\subset\bP(2,1,1,1)_{w,x,y,z},
$$
$$
 G=\langle\sigma\rangle=C_4,\quad\sigma \colon(w,x,y,z)\mapsto (w,x,y,\zeta_4z).
$$
The fixed curves stratification is
$$
    \centering
    \begin{tabular}{c|c|c|c|c|c|c}
$i$&Curve $\xi_i$& $I_{\xi_i}$&$D_{\xi_i}$&$\rg(\xi_i)$&$\rg(\xi_i/D_{\xi_i})$&Standard form\\\hline
    1&$ \{z=0\}$& $C_4=\langle\sigma\rangle$&$G$&1&1&yes\\
    \end{tabular}
$$
We have
$$
\Br([X/G])=(\bZ/4\bZ)^2.
$$

\

\noindent$\fbox{1.B2.2}$ The model is given by
$$
 X=\{w^2=z^3+zL_2(x^2,y^2)+xyL_2'(x^2,y^2)\}\subset\bP(3,1,1,2)_{w,x,y,z},
$$
$$
  G=\langle\sigma\rangle=C_4,\quad\sigma \colon(w,x,y,z)\mapsto (\zeta_4w,x,-y,-z).
$$
The fixed curves stratification is
$$
    \centering
    \begin{tabular}{c|c|c|c|c|c|c}
$i$&Curve $\xi_i$& $I_{\xi_i}$&$D_{\xi_i}$&$\rg(\xi_i)$&$\rg(\xi_i/D_{\xi_i})$&Standard form\\\hline
    1&$ \{w=0\}$& $C_2=\langle\sigma^2\rangle$&$G$&4&2&yes\\
    \end{tabular}
$$
We have
$$
\Br([X/G])=(\bZ/2\bZ)^4.
$$

\

\noindent$\bullet$ {\bf n=5.}

\noindent$\fbox{1.5}$ The model is given by
$$
 X=\{w^2=z^3+\lambda x^4z+x(\mu x^5+y^5)\}\subset\bP(3,1,1,2)_{w,x,y,z},
$$
$$
  G=\langle\sigma\rangle=C_5,\quad\sigma \colon(w,x,y,z)\mapsto (w,x,\zeta_5y,z).
$$
The fixed curves stratification is
$$
    \centering
    \begin{tabular}{c|c|c|c|c|c|c}
$i$&Curve $\xi_i$& $I_{\xi_i}$&$D_{\xi_i}$&$\rg(\xi_i)$&$\rg(\xi_i/D_{\xi_i})$&Standard form\\\hline
    1&$ \{y=0\}$& $C_5=\langle\sigma\rangle$&$G$&1&1&yes\\
    \end{tabular}
$$
We have
$$
\Br([X/G])=(\bZ/5\bZ)^2.
$$

\

\noindent$\bullet$ {\bf n=6.}

\noindent$\fbox{3.6.1}$ The model is given by
$$
 X=\{w^3+x^3+y^3+xz^2+\lambda yz^2=0\}\subset\bP^3_{w,x,y,z},
$$
$$
  G=\langle\sigma\rangle=C_6,\quad\sigma \colon(w,x,y,z)\mapsto (\zeta_3w,x,y,-z).
$$
The fixed curves stratification is
$$
    \centering
    \begin{tabular}{c|c|c|c|c|c|c}
$i$&Curves $\xi_i$& $I_{\xi_i}$&$D_{\xi_i}$&$\rg(\xi_i)$&$\rg(\xi_i/D_{\xi_i})$&Standard form\\\hline
    1&$ \{z=0\}$& $C_2=\langle\sigma^3\rangle$&$G$&1&0&\multirow{2}{*}{yes}\\\cline{1-6}
    2&$ \{w=0\}$& $C_3=\langle\sigma^2\rangle$&$G$&1&0&\\
    \end{tabular}
$$
We have
$$
\Br([X/G])=0.
$$

\

\noindent$\fbox{3.6.2}$ The model is given by
$$
 X=\{wx^2+w^3+y^3+z^3+\lambda wyz=0\}\subset\bP^3_{w,x,y,z},
$$
$$
  G=\langle\sigma\rangle=C_6,\quad\sigma \colon(w,x,y,z)\mapsto (w,-x,\zeta_3y,\zeta_3^2z).
$$
The fixed curves stratification is
$$
    \centering
    \begin{tabular}{c|c|c|c|c|c|c}
$i$&Curve $\xi_i$& $I_{\xi_i}$&$D_{\xi_i}$&$\rg(\xi_i)$&$\rg(\xi_i/D_{\xi_i})$&Standard form\\\hline
    1&$ \{x=0\}$& $C_2=\langle\sigma^3\rangle$&$G$&1&1&yes\\
    \end{tabular}
$$
We have
$$
\Br([X/G])=(\bZ/2\bZ)^2.
$$

\

\noindent$\fbox{2.G3.1}$ The model is given by
$$
 X=\{w^2=L_4(x,y)+z^3L_1(x,y)\}\subset\bP(2,1,1,1)_{w,x,y,z},
$$
$$
  G=\langle\sigma\rangle=C_6,\quad\sigma \colon(w,x,y,z)\mapsto (-w,x,y,\zeta_3z).
$$
The fixed curves stratification is
$$
    \centering
    \begin{tabular}{c|c|c|c|c|c|c}
$i$&Curves $\xi_i$& $I_{\xi_i}$&$D_{\xi_i}$&$\rg(\xi_i)$&$\rg(\xi_i/D_{\xi_i})$&Standard form\\\hline
    1&$ \{w=0\}$& $C_2=\langle\sigma^3\rangle$&$G$&3&0&\multirow{2}{*}{yes}\\\cline{1-6}
    2&$ \{z=0\}$& $C_3=\langle\sigma^2\rangle$&$G$&1&0&\\
    \end{tabular}
$$
We have
$$
\Br([X/G])=0.
$$

\

\noindent$\fbox{2.G3.2}$ The model is given by
$$
 X=\{w^2=x(x^3+y^3+z^3)+yzL_1(x^2,yz)\}\subset\bP(2,1,1,1)_{w,x,y,z},
$$
$$
  G=\langle\sigma\rangle=C_6,\quad\sigma \colon(w,x,y,z)\mapsto (-w,x,\zeta_3y,\zeta_3^2z).
$$
The fixed curves stratification is
$$
    \centering
    \begin{tabular}{c|c|c|c|c|c|c}
$i$&Curve $\xi_i$& $I_{\xi_i}$&$D_{\xi_i}$&$\rg(\xi_i)$&$\rg(\xi_i/D_{\xi_i})$&Standard form\\\hline
    1&$ \{w=0\}$& $C_2=\langle\sigma^3\rangle$&$G$&3&1&yes\\
    \end{tabular}
$$
We have
$$
\Br([X/G])=(\bZ/2\bZ)^2.
$$

\

\noindent$\fbox{2.6}$ The model is given by
$$
 X=\{w^2=x^3y+y^4+z^4+\lambda y^2z^2\}\subset\bP(2,1,1,1)_{w,x,y,z},
$$
$$
  G=\langle\sigma\rangle=C_6,\quad\sigma \colon(w,x,y,z)\mapsto (-w,\zeta_3x,y,-z).
$$
The fixed curves stratification is
$$
    \centering
    \begin{tabular}{c|c|c|c|c|c|c}
$i$&Curve $\xi_i$& $I_{\xi_i}$&$D_{\xi_i}$&$\rg(\xi_i)$&$\rg(\xi_i/D_{\xi_i})$&Standard form\\\hline
    1&$ \{x=0\}$& $C_3=\langle\sigma^2\rangle$&$G$&1&1&yes\\
    \end{tabular}
$$
We have
$$
\Br([X/G])=(\bZ/3\bZ)^2.
$$

\

\noindent \fbox{1.$\sigma\rho$} The model is given by
$$
 X=\{w^2=z^3+L_6(x,y)\}\subset\bP(3,1,1,2)_{w,x,y,z},
$$
$$
  G=\langle\sigma\rangle=C_6,\quad\sigma \colon(w,x,y,z)\mapsto (-w,x,y,\zeta_3z).
$$
The fixed curves stratification is
$$
    \centering
    \begin{tabular}{c|c|c|c|c|c|c}
$i$&Curves $\xi_i$& $I_{\xi_i}$&$D_{\xi_i}$&$\rg(\xi_i)$&$\rg(\xi_i/D_{\xi_i})$&Standard form\\\hline
    1&$ \{w=0\}$& $C_2=\langle\sigma^3\rangle$&$G$&4&0&\multirow{2}{*}{yes}\\\cline{1-6}
    2&$ \{z=0\}$& $C_3=\langle\sigma^2\rangle$&$G$&2&0&
    \end{tabular}
$$
We have
$$
\Br([X/G])=0.
$$

\

\noindent \fbox{1.$\rho2$} The model is given by
$$
 X=\{w^2=z^3+L_3(x^2,y^2)\}\subset\bP(3,1,1,2)_{w,x,y,z},
$$
$$
  G=\langle\sigma\rangle=C_6,\quad\sigma \colon(w,x,y,z)\mapsto (w,x,-y,\zeta_3z).
$$
The fixed curves stratification is
$$
    \centering
    \begin{tabular}{c|c|c|c|c|c|c}
$i$&Curves $\xi_i$& $I_{\xi_i}$&$D_{\xi_i}$&$\rg(\xi_i)$&$\rg(\xi_i/D_{\xi_i})$&Standard form\\\hline
    1&$ \{y=0\}$& $C_2=\langle\sigma^3\rangle$&$G$&1&0&\multirow{2}{*}{yes}\\\cline{1-6}
    2&$ \{z=0\}$& $C_3=\langle\sigma^2\rangle$&$G$&2&0&
    \end{tabular}
$$
We have
$$
\Br([X/G])=0.
$$

\

\noindent \fbox{1.B3.1} The model is given by
$$
 X=\{w^2=z^3+xL_1(x^3,y^3)z+L_2(x^3,y^3)\}\subset\bP(3,1,1,2)_{w,x,y,z},
$$
$$
  G=\langle\sigma\rangle=C_6,\quad\sigma \colon(w,x,y,z)\mapsto (-w,x,\zeta_3y,z).
$$
The fixed curves stratification is
$$
    \centering
    \begin{tabular}{c|c|c|c|c|c|c}
$i$&Curves $\xi_i$& $I_{\xi_i}$&$D_{\xi_i}$&$\rg(\xi_i)$&$\rg(\xi_i/D_{\xi_i})$&Standard form\\\hline
    1&$ \{w=0\}$& $C_2=\langle\sigma^3\rangle$&$G$&4&0&\multirow{2}{*}{yes}\\\cline{1-6}
    2&$ \{y=0\}$& $C_3=\langle\sigma^2\rangle$&$G$&1&0&
    \end{tabular}
$$
We have
$$
\Br([X/G])=0.
$$

\

\noindent \fbox{1.B3.2} The model is given by
$$
 X=\{w^2=z^3+\lambda x^2y^2z+L_2(x^3,y^3)\}\subset\bP(3,1,1,2)_{w,x,y,z},
$$
$$
  G=\langle\sigma\rangle=C_6,\quad\sigma \colon(w,x,y,z)\mapsto (-w,x,\zeta_3y,\zeta_3 z).
$$
The fixed curves stratification is
$$
    \centering
    \begin{tabular}{c|c|c|c|c|c|c}
$i$&Curve $\xi_i$& $I_{\xi_i}$&$D_{\xi_i}$&$\rg(\xi_i)$&$\rg(\xi_i/D_{\xi_i})$&Standard form\\\hline
    1&$ \{w=0\}$& $C_2=\langle\sigma^3\rangle$&$G$&4&2&{yes}\\
    \end{tabular}
$$
We have
$$
\Br([X/G])=(\bZ/2\bZ)^4.
$$

\

\noindent \fbox{1.6} The model is given by
$$
 X=\{w^2=z^3+\lambda x^4z+\mu x^6+y^6\}\subset\bP(3,1,1,2)_{w,x,y,z},
$$
$$
  G=\langle\sigma\rangle=C_6,\quad\sigma \colon(w,x,y,z)\mapsto (w,x,-\zeta_3y,z).
$$
The fixed curves stratification is
$$
    \centering
    \begin{tabular}{c|c|c|c|c|c|c}
$i$&Curve $\xi_i$& $I_{\xi_i}$&$D_{\xi_i}$&$\rg(\xi_i)$&$\rg(\xi_i/D_{\xi_i})$&Standard form\\\hline
    1&$ \{w=0\}$& $C_6=\langle\sigma\rangle$&$G$&1&1&{yes}\\
    \end{tabular}
$$
We have
$$
\Br([X/G])=(\bZ/6\bZ)^2.
$$

\

\noindent$\bullet$ {\bf n=8.}\\

\noindent  \fbox{1.B4.2} The model is given by
$$
 X=\{w^2=\lambda x^2y^2z+xy(x^4+y^4)\}\subset\bP(3,1,1,2)_{w,x,y,z},
$$
$$
  G=\langle\sigma\rangle=C_8,\quad\sigma \colon(w,x,y,z)\mapsto (\zeta_8w,x,\zeta_4y,-\zeta_4z).
$$
The fixed curves stratification is
$$
    \centering
    \begin{tabular}{c|c|c|c|c|c}
$i$&Curves $\xi_i$& $I_{\xi_i}$&$D_{\xi_i}$&$\rg(\xi_i)$&Standard form\\\hline
    1&$ \{x=0\}$& $C_2=\langle\sigma^4\rangle$&$G$&0&\multirow{3}{*}{no}\\\cline{1-5}
    2&$ \{y=0\}$& $C_2=\langle\sigma^4\rangle$&$G$&0&\\\cline{1-5}
   3&$ \{\lambda xyz+x^4+y^4=0\}$& $C_2=\langle\sigma^4\rangle$&$G$&0&\\
    \end{tabular}
$$
The model is not in standard form: the divisor $\{w=0\}\cap X$ fixed by $\sigma^4$ is the union of three rational curves $\xi_1, \xi_2$ and $\xi_3$ meeting at one point $p=[0:0:0:1]$, and thus not normal crossing. Moreover, $p$ is a node of $\xi_3$. To reach a standard form, consider the blowup of $X$ at $p$. Let $E_1$ be the exceptional divisor and $\tilde\xi_i$ be the strict transform of $\xi_i$ for $i=1,2,3.$ We find $\tilde \xi_3$ meets $E_1$ at two points $p_1$ and $p_2$, $\tilde \xi_1\cap \tilde \xi_3\cap E_1=\{p_1\}$, $\tilde \xi_2\cap\tilde \xi_3\cap E_1=\{p_2\}$ and $\tilde \xi_1$ and $\tilde\xi_2$ are disjoint. Then blowing up the points $p_1$ and $p_2$ brings the model into a standard form. One then computes via Proposition~\ref{prop:ratsurf} that
$$
\Br([X/G])=\bZ/2\bZ.
$$


\

\noindent$\bullet$ {\bf n=9.}\\

\noindent  \fbox{3.9} The model is given by
$$
 X=\{w^3+xz^2+x^2y+y^2z=0\}\subset\bP^3_{w,x,y,z},
$$
$$
  G=\langle\sigma\rangle=C_9,\quad\sigma \colon(w,x,y,z)\mapsto (\zeta_9w,x,\zeta_3y,\zeta_3^2z).
$$
The fixed curves stratification is given by 
$$
    \centering
    \begin{tabular}{c|c|c|c|c|c|c}
$i$&Curve $\xi_i$& $I_{\xi_i}$&$D_{\xi_i}$&$\rg(\xi_i)$&$\rg(\xi_i/D_{\xi_i})$&Standard form\\\hline
    1&$ \{w=0\}$& $C_3=\langle\sigma^3\rangle$&$G$&1&0&{yes}\\
    \end{tabular}
$$
We have
$$
\Br([X/G])=0.
$$

\

\noindent$\bullet$ {\bf n=10.}\\

\noindent  \fbox{1.B5} The model is given by
$$
 X=\{w^2=z^3+\lambda x^4z+x(\mu x^5+y^5)\}\subset\bP(3,1,1,2)_{w,x,y,z},
$$
$$
  G=\langle\sigma\rangle=C_{10},\quad\sigma \colon(w,x,y,z)\mapsto (-w,x,\zeta_5y,z).
$$
The fixed curves stratification is given by 
$$
    \centering
    \begin{tabular}{c|c|c|c|c|c|c}
$i$&Curves $\xi_i$& $I_{\xi_i}$&$D_{\xi_i}$&$\rg(\xi_i)$&$\rg(\xi_i/D_{\xi_i})$&Standard form\\\hline
    1&$ \{w=0\}$& $C_2=\langle\sigma^5\rangle$&$G$&4&0&\multirow{2}{*}{yes}\\\cline{1-6}
    2&$ \{y=0\}$& $C_{5}=\langle\sigma^2\rangle$&$G$&1&0&
    \end{tabular}
$$
We have
$$
\Br([X/G])=0.
$$

\noindent$\bullet$ {\bf n=12.}\\

\noindent  \fbox{3.12} The model is given by
$$
 X=\{w^3+x^3+yz^2+y^2x=0\}\subset\bP^3_{w,x,y,z},
$$
$$
  G=\langle\sigma\rangle=C_{12},\quad\sigma \colon(w,x,y,z)\mapsto (\zeta_3w,x,-y,\zeta_4z).
$$
The fixed curves stratification is given by 
$$
    \centering
    \begin{tabular}{c|c|c|c|c|c|c}
$i$&Curves $\xi_i$& $I_{\xi_i}$&$D_{\xi_i}$&$\rg(\xi_i)$&$\rg(\xi_i/D_{\xi_i})$&Standard form\\\hline
    1&$ \{z=0\}$& $C_2=\langle\sigma^6\rangle$&$G$&1&0&\multirow{2}{*}{yes}\\\cline{1-6}
    2&$ \{w=0\}$& $C_{3}=\langle\sigma^4\rangle$&$G$&1&0&
    \end{tabular}
$$
We have
$$
\Br([X/G])=0.
$$

\

\noindent  \fbox{2.12} The model is given by
$$
 X=\{w^2=x^3y+y^4+z^4\}\subset\bP(2,1,1,1)_{w,x,y,z},
$$
$$
  G=\langle\sigma\rangle=C_{12},\quad\sigma \colon(w,x,y,z)\mapsto (w,\zeta_3x,y,\zeta_4z).
$$
The fixed curves stratification is given by 
$$
    \centering
    \begin{tabular}{c|c|c|c|c|c|c}
$i$&Curves $\xi_i$& $I_{\xi_i}$&$D_{\xi_i}$&$\rg(\xi_i)$&$\rg(\xi_i/D_{\xi_i})$&Standard form\\\hline
    1&$ \{z=0\}$& $C_2=\langle\sigma^6\rangle$&$G$&1&0&\multirow{2}{*}{yes}\\\cline{1-6}
    2&$ \{x=0\}$& $C_{3}=\langle\sigma^4\rangle$&$G$&1&0&
    \end{tabular}
$$
We have
$$
\Br([X/G])=0.
$$

\

\noindent  \fbox{1.$\sigma\rho$2.2} The model is given by
$$
 X=\{w^2=z^3+xyL_2(x^2,y^2)\}\subset\bP(3,1,1,2)_{w,x,y,z},
$$
$$
  G=\langle\sigma\rangle=C_{12},\quad\sigma \colon(w,x,y,z)\mapsto (\zeta_4w,x,-y,-\zeta_3z).
$$
The fixed curves stratification is given by 
$$
    \centering
    \begin{tabular}{c|c|c|c|c|c|c}
$i$&Curves $\xi_i$& $I_{\xi_i}$&$D_{\xi_i}$&$\rg(\xi_i)$&$\rg(\xi_i/D_{\xi_i})$&Standard form\\\hline
    1&$ \{w=0\}$& $C_2=\langle\sigma^6\rangle$&$G$&4&0&\multirow{2}{*}{yes}\\\cline{1-6}
    2&$ \{z=0\}$& $C_{3}=\langle\sigma^4\rangle$&$G$&2&0&
    \end{tabular}
$$

We have
$$
\Br([X/G])=0.
$$

\

\noindent$\bullet$ {\bf n=14.}\\

\noindent  \fbox{2.G7} The model is given by
$$
 X=\{w^2=x^3y+y^3z+xz^3\}\subset\bP(2,1,1,1)_{w,x,y,z},
$$
$$
  G=\langle\sigma\rangle=C_{14},\quad\sigma \colon(w,x,y,z)\mapsto (-w,\zeta_7x,\zeta_7^4y,\zeta_7^2z).
$$
The fixed curves stratification is given by 
$$
    \centering
    \begin{tabular}{c|c|c|c|c|c|c}
$i$&Curves $\xi_i$& $I_{\xi_i}$&$D_{\xi_i}$&$\rg(\xi_i)$&$\rg(\xi_i/D_{\xi_i})$&Standard form\\\hline
    1&$ \{w=0\}$& $C_2=\langle\sigma^7\rangle$&$G$&3&0&{yes}\\
    \end{tabular}
$$
We have
$$
\Br([X/G])=0.
$$

\

\noindent$\bullet$ {\bf n=15.}\\

\noindent  \fbox{1.$\rho5$} The model is given by
$$
 X=\{w^2=z^3+x(x^5+y^5)\}\subset\bP(3,1,1,2)_{w,x,y,z},
$$
$$
  G=\langle\sigma\rangle=C_{15},\quad\sigma \colon(w,x,y,z)\mapsto (w,x,\zeta_5y,\zeta_3z).
$$
The fixed curves stratification is given by 
$$
    \centering
    \begin{tabular}{c|c|c|c|c|c|c}
$i$&Curves $\xi_i$& $I_{\xi_i}$&$D_{\xi_i}$&$\rg(\xi_i)$&$\rg(\xi_i/D_{\xi_i})$&Standard form\\\hline
    1&$ \{z=0\}$& $C_3=\langle\sigma^5\rangle$&$G$&2&0&\multirow{2}{*}{yes}\\\cline{1-6}
    2&$ \{y=0\}$& $C_{5}=\langle\sigma^3\rangle$&$G$&1&0&
    \end{tabular}
$$

We have
$$
\Br([X/G])=0.
$$

\noindent$\bullet$ {\bf n=18.}\\

\noindent  \fbox{2.G9} The model is given by
$$
 X=\{w^2=x^3y+y^4+xz^3\}\subset\bP(2,1,1,1)_{w,x,y,z},
$$
$$
  G=\langle\sigma\rangle=C_{18},\quad\sigma \colon(w,x,y,z)\mapsto (-w,\zeta_9^6x,y,\zeta_9z).
$$
The fixed curves stratification is given by 
$$
    \centering
    \begin{tabular}{c|c|c|c|c|c|c}
$i$&Curves $\xi_i$& $I_{\xi_i}$&$D_{\xi_i}$&$\rg(\xi_i)$&$\rg(\xi_i/D_{\xi_i})$&Standard form\\\hline
    1&$ \{w=0\}$& $C_2=\langle\sigma^9\rangle$&$G$&3&0&\multirow{2}{*}{yes}\\\cline{1-6}
    2&$ \{z=0\}$& $C_{3}=\langle\sigma^6\rangle$&$G$&1&0&
    \end{tabular}
$$
We have
$$
\Br([X/G])=0.
$$

\

\noindent$\bullet$ {\bf n=20.}\\

\noindent  \fbox{1.B10} The model is given by
$$
 X=\{w^2=z^3+x^4z+xy^5\}\subset\bP(3,1,1,2)_{w,x,y,z},
$$
$$
  G=\langle\sigma\rangle=C_{20},\quad\sigma \colon(w,x,y,z)\mapsto (\zeta_4w,x,\zeta_{10}y,-z).
$$
The fixed curves stratification is given by 
$$
    \centering
    \begin{tabular}{c|c|c|c|c|c|c}
$i$&Curves $\xi_i$& $I_{\xi_i}$&$D_{\xi_i}$&$\rg(\xi_i)$&$\rg(\xi_i/D_{\xi_i})$&Standard form\\\hline
    1&$ \{z=0\}$& $C_2=\langle\sigma^{10}\rangle$&$G$&0&0&\multirow{2}{*}{no}\\\cline{1-6}
    2&$ \{y=0\}$& $C_{5}=\langle\sigma^4\rangle$&$G$&1&0&
    \end{tabular}
$$
The model is not in standard form. The curve $\xi_1$ has an $\sA_4$-singularity at $p=[0:1:0:0]$, and $\xi_1$ intersects $\xi_2$ at $p$ non-transversally. One can obtain a standard form via successive blowups such that the strict transforms of $\xi_1$ and $\xi_2$ and the exceptional divisors form a tree of rational curves. 
We have 
$$
\Br([X/G])=0.
$$

\

\noindent$\bullet$ {\bf n=24.}\\

\noindent  \fbox{1.$\sigma\rho4$} The model is given by
$$
 X=\{w^2=z^3+xy(x^4+y^4)\}\subset\bP(3,1,1,2)_{w,x,y,z},
$$
$$
  G=\langle\sigma\rangle=C_{24},\quad\sigma \colon(w,x,y,z)\mapsto (\zeta_8w,x,\zeta_{4}y,-\zeta_{12}^7z).
$$
The fixed curves stratification is given by 
$$
    \centering
    \begin{tabular}{c|c|c|c|c|c|c}
$i$&Curves $\xi_i$& $I_{\xi_i}$&$D_{\xi_i}$&$\rg(\xi_i)$&$\rg(\xi_i/D_{\xi_i})$&Standard form\\\hline
    1&$ \{w=0\}$& $C_2=\langle\sigma^{12}\rangle$&$G$&4&0&\multirow{2}{*}{yes}\\\cline{1-6}
    2&$ \{z=0\}$& $C_{3}=\langle\sigma^8\rangle$&$G$&2&0&
    \end{tabular}
$$
We have 
$$
\Br([X/G])=0.
$$

\

\noindent$\bullet$ {\bf n=30.}\\

\noindent  \fbox{1.$\sigma\rho5$} The model is given by
$$
 X=\{w^2=z^3+x(x^5+y^5)\}\subset\bP(3,1,1,2)_{w,x,y,z},
$$
$$
  G=\langle\sigma\rangle=C_{30},\quad\sigma \colon(w,x,y,z)\mapsto (-w,x,\zeta_{5}y,\zeta_{3}z).
$$
The fixed curves stratification is given by 
$$
    \centering
    \begin{tabular}{c|c|c|c|c|c|c}
$i$&Curves $\xi_i$& $I_{\xi_i}$&$D_{\xi_i}$&$\rg(\xi_i)$&$\rg(\xi_i/D_{\xi_i})$&Standard form\\\hline
    1&$ \{w=0\}$& $C_2=\langle\sigma^{15}\rangle$&$G$&4&0&\multirow{3}{*}{yes}\\\cline{1-6}
    2&$ \{z=0\}$& $C_{3}=\langle\sigma^{10}\rangle$&$G$&2&0&\\\cline{1-6}
    3&$ \{y=0\}$& $C_{5}=\langle\sigma^6\rangle$&$G$&1&0&
    \end{tabular}
$$
We have 
$$
\Br([X/G])=0.
$$

\

\subsection{Noncyclic groups}

We continue with actions of noncyclic groups.

\

\noindent$\bullet$ {\bf Automorphisms of $\bP^1\times\bP^1$}\\
Let $X=\bP^1\times\bP^1$. Note that
$\rH^1(G,\Pic(X))=0$ for any $G\subset\Aut(X)$ and thus
$$
\Br([X/G])=\rH^2(G,\bC^\times)/\mathrm{Am}(X,G),
$$
where $\mathrm{Am}(X,G)$ is the Amitsur group of the $G$-action on $X$. The computation of $\mathrm{Am}(X,G)$ in this case is straightforward, see e.g., \cite[Proposition 6.7]{amitsur}. So for actions on quadric surfaces, we compute $\Br([X/G])$ from the Amitsur groups. 


\

\noindent  \fbox{0.mn} The action on $X=\bP^1\times\bP^1$ is given by 
$$
G=C_n\times C_m,\qquad (x,y)\stackrel{\sigma_1}{\mapsto}(\zeta_nx,y), \quad(x,y)\stackrel{\sigma_2}{\mapsto}(x,\zeta_my).
$$
One has $\Pic(X)^G=\bZ^2$, generated by the $G$-invariant line bundles $\cO(1,0)$ and $\cO(0,1)$. Both $\cO(1,0)$ and $\cO(0,1)$ are $G$-linearizable. It follows that $\Am(X,G)=0$ and 
$$
\Br([X/G])=\rH^2(G,\bC^\times)=\bZ/\mathrm{gcd}(n,m)\bZ.
$$

\noindent We also compute the fixed curves stratification in this case
$$
    \centering
    \begin{tabular}{c|c|c|c|c|c|c}
$i$&Curves $\xi_i$& $I_{\xi_i}$&$D_{\xi_i}$&$\rg(\xi_i)$&$\rg(\xi_i/D_{\xi_i})$&Standard form\\\hline
    1&$ [1:0]\times \bP^1$& $C_n=\langle\sigma_1\rangle$&$G$&0&0&\multirow{4}{*}{yes}\\\cline{1-6}
     2&$ [0:1]\times \bP^1$& $C_n=\langle\sigma_1\rangle$&$G$&0&0&\\\cline{1-6}
    3&$ \bP^1\times[1:0]$& $C_m=\langle\sigma_2\rangle$&$G$&0&0&\\\cline{1-6}
    4&$ \bP^1\times[0:1]$& $C_m=\langle\sigma_2\rangle$&$G$&0&0&
    \end{tabular}
$$
Using Proposition~\ref{prop:ratsurf}, one can also deduce $\Br([X/G])=\bZ/\mathrm{gcd}(n,m)\bZ$.

\

\noindent  \fbox{P1.22n} The action on $X=\bP^1\times\bP^1$ is given by 
$$
G=C_2\times C_{2n},\qquad (x,y)\mapsto(x^{-1},y), \quad(x,y)\mapsto(-x,\zeta_{2n}y).
$$
One has $\Pic(X)^G=\bZ^2$, generated by $\cO(1,0)$ and $\cO(0,1)$. The line bundle $\cO(1,0)$ is not $G$-linearizable while $\cO(0,1)$ is. It follows that $\Am(X,G)=\bZ/2\bZ$ and
$$
\Br([X/G])=0.
$$

\

\noindent  \fbox{P1.222n} The action on $X=\bP^1\times\bP^1$ is given by 
$$
G=C_2^2\times C_{2n},\qquad (x,y)\mapsto(\pm x^{\pm1},y), \quad(x,y)\mapsto(x,\zeta_{2n}y).
$$
One has $\Pic(X)^G=\bZ^2$, generated by $\cO(1,0)$ and $\cO(0,1)$. The line bundle $\cO(1,0)$ is not $G$-linearizable while $\cO(0,1)$ is. It follows that $\Am(X,G)=\bZ/2\bZ$ and
$$
\Br([X/G])=(\bZ/2\bZ)^2.
$$

\

\noindent  \fbox{P1.22.1} The action on $X=\bP^1\times\bP^1$ is given by 
$$
G=C_2^2,\qquad (x,y)\mapsto(\pm x^{\pm1},y).
$$
One has $\Pic(X)^G=\bZ^2$, generated by $\cO(1,0)$ and $\cO(0,1)$. The line bundle $\cO(1,0)$ is not $G$-linearizable while $\cO(0,1)$ is. It follows that $\Am(X,G)=\bZ/2\bZ$ and
$$
\Br([X/G])=0.
$$

\

\noindent  \fbox{P1.222} The action on $X=\bP^1\times\bP^1$ is given by 
$$
G=C_2^3,\qquad (x,y)\mapsto(\pm x,\pm y),\qquad (x,y)\mapsto( x^{-1},y).
$$
One has $\Pic(X)^G=\bZ^2$, generated by $\cO(1,0)$ and $\cO(0,1)$. The line bundle $\cO(1,0)$ is not $G$-linearizable while $\cO(0,1)$ is. It follows that $\Am(X,G)=\bZ/2\bZ$ and
$$
\Br([X/G])=(\bZ/2\bZ)^2.
$$

\

\noindent  \fbox{P1.2222} The action on $X=\bP^1\times\bP^1$ is given by 
$$
G=C_2^4,\qquad (x,y)\mapsto(\pm x^{\pm1},\pm y^{\pm1}).
$$
One has $\Pic(X)^G=\bZ^2$, generated by $\cO(1,0)$ and $\cO(0,1)$. Both $\cO(1,0)$ and $\cO(0,1)$ are not $G$-linearizable. It follows that $\Am(X,G)=(\bZ/2\bZ)^2$ and
$$
\Br([X/G])=(\bZ/2\bZ)^4.
$$

\

\noindent  \fbox{P1s.24} The action on $X=\bP^1\times\bP^1$ is given by 
$$
G=C_2\times C_4,\qquad (x,y)\mapsto(x^{-1}, y^{-1}),\qquad (x,y)\mapsto(-y,x). 
$$
One has $\Pic(X)^G=\bZ$, generated by $\cO(1,1)$, which is not $G$-linearizable. It follows that $\Am(X,G)=\bZ/2\bZ$ and
$$
\Br([X/G])=0.
$$

\

\noindent  \fbox{P1s.222} The action on $X=\bP^1\times\bP^1$ is given by 
$$
G=C_2^3,\qquad (x,y)\mapsto(-x, -y),\quad (x,y)\mapsto(x^{-1}, y^{-1}),\quad (x,y)\mapsto(y,x). 
$$
One has $\Pic(X)^G=\bZ$, generated by $\cO(1,1)$, which is $G$-linearizable. It follows that $\Am(X,G)=0$ and
$$
\Br([X/G])=(\bZ/2\bZ)^3.
$$


\

\noindent$\bullet$ {\bf Automorphisms of $\bP^2$}

\

\noindent  \fbox{0.V9} The action on $X=\bP^2$ is given by 
$$
G=C_3^2,\qquad (x:y:z)\mapsto(x:\zeta_3y:\zeta_3^2z),\quad (x:y:z)\mapsto(y:z:x).
$$
One has $\Pic(X)^G=\bZ$, generated by $\cO(1)$, which is not $G$-linearizable. It follows that $\Am(X,G)=\bZ/3\bZ$ and 
$$
\Br([X/G])=0.
$$

\noindent$\bullet$ {\bf Automorphisms of del Pezzo surfaces of degree $4$}\\

The surface $X\subset\bP^4_{x_1,\ldots,x_5}$ is given by the following equations with general $a,b,c\in\bC$
\begin{align}\label{eq:Xdp4}
    cx_1^2-ax_3^2-(a-c)x_4^2-ac(a-c)x_5^2=0\\
    cx_2^2-bx_3^2+(c-b)x_4^2-bc(c-b)x_5^2=0\notag.
\end{align}

\noindent$\fbox{4.222}$ The action on $X$ is given by 
$$
G=C_2^3,\qquad \sigma_1: (\mathbf x)\mapsto(-x_1,x_2,x_3,x_4,x_5),
$$
$$
\sigma_2: (\mathbf x)\mapsto(x_1,-x_2,x_3,x_4,x_5),\quad \sigma_3: (\mathbf x)\mapsto(x_1,x_2,-x_3,x_4,x_5).
$$
The fixed curves stratification is
$$
    \centering
    \begin{tabular}{c|c|c|c|c|c|c}
$i$&Curves $\xi_i$& $I_{\xi_i}$&$D_{\xi_i}$&$\rg(\xi_i)$&$\rg(\xi_i/D_{\xi_i})$&Standard form\\\hline
    1&$\{x_1=0\}$& $C_2=\langle\sigma_1\rangle$&$G$&1&0&\multirow{3}{*}{yes}\\\cline{1-6}
      2&$\{x_2=0\}$& $C_2=\langle\sigma_2\rangle$&$G$&1&0&\\\cline{1-6}
   3&$\{x_3=0\}$& $C_2=\langle\sigma_3\rangle$&$G$&1&0&\\
    \end{tabular}
$$
The images of $\xi_i$ and $\xi_j$  in $X^{(1)}/G$ intersect in 2 points for $i\ne j\in\{1,2,3\}.$ We find 
$$
\Br([X/G])=(\bZ/2\bZ)^4.
$$

\

\noindent$\fbox{4.2222}$ The action on $X$ is given by 
$$
G=C_2^4,\quad \sigma_1: (\mathbf x)\mapsto(-x_1,x_2,x_3,x_4,x_5),\quad \sigma_2: (\mathbf x)\mapsto(x_1,-x_2,x_3,x_4,x_5),
$$
$$
\sigma_3: (\mathbf x)\mapsto(x_1,x_2,-x_3,x_4,x_5),\quad \sigma_4: (\mathbf x)\mapsto(x_1,x_2,x_3,-x_4,x_5).
$$
The fixed curves stratification is
$$
    \centering
    \begin{tabular}{c|c|c|c|c|c|c}
$i$&Curves $\xi_i$& $I_{\xi_i}$&$D_{\xi_i}$&$\rg(\xi_i)$&$\rg(\xi_i/D_{\xi_i})$&Standard form\\\hline
    1&$\{x_1=0\}$& $C_2=\langle\sigma_1\rangle$&$G$&1&0&\multirow{5}{*}{yes}\\\cline{1-6}
      2&$\{x_2=0\}$& $C_2=\langle\sigma_2\rangle$&$G$&1&0&\\\cline{1-6}
   3&$\{x_3=0\}$& $C_2=\langle\sigma_3\rangle$&$G$&1&0&\\\cline{1-6}
   4&$\{x_4=0\}$& $C_2=\langle\sigma_4\rangle$&$G$&1&0&\\\cline{1-6}
   5&$\{x_5=0\}$& $C_2=\langle\sigma_1\sigma_2\sigma_3\sigma_4\rangle$&$G$&1&0&\\
    \end{tabular}
$$

The images of $\xi_i$ and $\xi_j$  in $X^{(1)}/G$ intersect in 1 point for $i\ne j\in\{1,\ldots,5\}.$ We find 
$$
\Br([X/G])=(\bZ/2\bZ)^6.
$$

\

\noindent$\fbox{4.42}$ The surface $X$ is given  by \eqref{eq:Xdp4} with $(a:b:c)=(1:\xi:1+\xi)$ for any $\xi\in \bC\setminus\{0,\pm1\}$. The $G=C_4\times C_2$-action on $X$ is generated by
$$
\sigma_1: (\mathbf x)\mapsto(-x_2,x_1,x_4,x_3,-x_5),\qquad \sigma_2: (\mathbf x)\mapsto(x_1,x_2,x_3,x_4,-x_5).
$$
The fixed curves stratification is
$$
    \centering
    \begin{tabular}{c|c|c|c|c|c|c}
$i$&Curves $\xi_i$& $I_{\xi_i}$&$D_{\xi_i}$&$\rg(\xi_i)$&$\rg(\xi_i/D_{\xi_i})$&Standard form\\\hline
    1&$\{x_5=0\}$& $C_2=\langle\sigma_2\rangle$&$G$&1&1&{yes}\\
    \end{tabular}
$$
 We have
$$
\Br([X/G])=(\bZ/2)^2.
$$

\

\noindent$\bullet$ {\bf Automorphisms of cubic surfaces}\\

\noindent$\fbox{3.33.1}$ 
The model is given by
$$
 X=\{w^3+x^3+y^3+z^3=0\}\subset\bP^3_{w,x,y,z},\quad
  G=\langle\sigma_1, \sigma_2\rangle=C_{3}^2,
  $$
  $$
  \sigma_1: (w,x,y,z)\mapsto (\zeta_3 w,x,y,z), \quad\sigma_2: (w,x,y,z)\mapsto (w,x,y,\zeta_3 z).
$$
The group $$\Br([X/G])=(\bZ/3\bZ)^2$$ has been computed in \cite[Section 5]{KT22}.

\

\noindent$\fbox{3.33.2}$ The model is given by
$$
 X=\{w^3+x^3+y^3+z^3+\lambda xyz=0\}\subset\bP^3_{w,x,y,z},
\quad
  G=\langle\sigma_1, \sigma_2\rangle=C_{3}^2,
  $$
  $$
  \sigma_1: (w,x,y,z)\mapsto (\zeta_3 w,x,y,z), \quad \sigma_2: (w,x,y,z)\mapsto (w,x,\zeta_3 y,\zeta_3^2 z).
$$
The fixed curves stratification is given by 
$$
    \centering
    \begin{tabular}{c|c|c|c|c|c|c}
$i$&Curves $\xi_i$& $I_{\xi_i}$&$D_{\xi_i}$&$\rg(\xi_i)$&$\rg(\xi_i/D_{\xi_i})$&Standard form\\\hline
    1&$ \{w=0\}$& $C_{3}=\langle\sigma_1\rangle$&$G$&1&1&yes
    \end{tabular}
$$
and
$$
\Br([X/G])=(\bZ/3\bZ)^2.
$$

\

\noindent$\fbox{3.36}$
The model is given by
$$
 X=\{w^3+x^3+xy^2+z^3=0\}\subset\bP^3_{w,x,y,z},\quad
  G=\langle\sigma_1, \sigma_2\rangle=C_{3}\times C_6,
  $$
  $$
  \sigma_{1}:(w,x,y,z)\mapsto (\zeta_3 w,x,y,z),\quad \sigma_2: (w,x,y,z)\mapsto (w,x,-y,\zeta_3 z).
$$
The fixed curves stratification is given by 
$$
    \centering
    \begin{tabular}{c|c|c|c|c|c|c}
$i$&Curves $\xi_i$& $I_{\xi_i}$&$D_{\xi_i}$&$\rg(\xi_i)$&$\rg(\xi_i/D_{\xi_i})$&Standard form\\\hline
  1&$ \{y=0\}$& $C_{2}=\langle\sigma_2^3\rangle$&$G$&1&0&\multirow{3}{*}{yes}\\\cline{1-6}
    2&$ \{w=0\}$& $C_3=\langle\sigma_1\rangle$&$G$&1&0&\\\cline{1-6}
    3&$ \{z=0\}$& $C_3=\langle\sigma_2^2\rangle$&$G$&1&0&
    \end{tabular}
$$
The images of $\xi_2$ and $\xi_3$ in $X^{(1)}/G$ intersect in two points, so that 
$$
\Br([X/G])=\bZ/3\bZ.
$$
\

\noindent$\fbox{3.333}$
The model is given by
$$
 X=\{w^3+x^3+y^3+z^3=0\}\subset\bP^3_{w,x,y,z},
$$  
$$
G=\langle\sigma_1, \sigma_2, \sigma_3\rangle=C_{3}^3,\quad\sigma_1:(w,x,y,z)\mapsto (\zeta_3 w,x,y,z),
$$
$$
\sigma_2:(w,x,y,z)\mapsto (w,x,\zeta_3 y,z),\quad\sigma_3:(w,x,y,z)\mapsto (w,x,y,\zeta_3 z).
$$
The group 
$$
\Br([X/G])=(\bZ/3\bZ)^3
$$
has been computed in \cite[Section 5]{KT22}. 

\

\noindent$\bullet$ {\bf Automorphisms of Del Pezzo surfaces of degree $2$}
\\

\noindent  \fbox{2.G2} The model is given by
$$
 X=\{w^2=L_4(x,y)+L_2(x,y)z^2+z^4\}\subset\bP(2,1,1,1)_{w,x,y,z}, 
$$
$$
G=\langle\sigma_{1},\sigma_2\rangle=C_2^2,
$$
$$
\sigma_1 \colon(w,x,y,z)\mapsto (-w,x,y,z),\quad\sigma_2 \colon(w,x,y,z)\mapsto (w,x,y,-z).
$$
The fixed curves stratification is 
$$
    \centering
    \begin{tabular}{c|c|c|c|c|c|c}
$i$&Curves $\xi_i$& $I_{\xi_i}$&$D_{\xi_i}$&$\rg(\xi_i)$&$\rg(\xi_i/D_{\xi_i})$&Standard form\\\hline
    1&$ \{w=0\}$& $C_2=\langle\sigma_1\rangle$&$G$&3&1&\multirow{2}{*}{yes}\\\cline{1-6}
    2&$ \{z=0\}$& $C_2=\langle\sigma_2\rangle$&$G$&1&0&\\    \end{tabular}
$$
The images of $\xi_1$ and $\xi_2$ in $X^{(1)}/G$ meet at four points. Recall  that a zero-cycle  $\sum_in_iP_i$ of degree $0$ on an elliptic curve is a divisor of a function on the curve if and only if  $\sum n_i[P_i]=0$, where the latter sum is for the group law of the elliptic curve. It follows that we have 
$$
\Br([X/G])=(\bZ/2\bZ)^5.
$$



\

\noindent  \fbox{2.G4.1} The model is given by
$$
 X=\{w^2=L_4(x,y)+z^4\}\subset\bP(2,1,1,1)_{w,x,y,z},  \quad
G=\langle\sigma_{1},\sigma_2\rangle=C_2\times C_4,
$$
$$
\sigma_1 \colon(w,x,y,z)\mapsto (-w,x,y,z),\quad\sigma_2 \colon(w,x,y,z)\mapsto (w,x,y,\zeta_4z).
$$
The fixed curves stratification is 
$$
    \centering
    \begin{tabular}{c|c|c|c|c|c|c}
$i$&Curves $\xi_i$& $I_{\xi_i}$&$D_{\xi_i}$&$\rg(\xi_i)$&$\rg(\xi_i/D_{\xi_i})$&Standard form\\\hline
    1&$ \{w=0\}$& $C_2=\langle\sigma_1\rangle$&$G$&3&0&\multirow{2}{*}{yes}\\\cline{1-6}
    2&$ \{z=0\}$& $C_4=\langle\sigma_2\rangle$&$G$&1&0&\\    \end{tabular}
$$
The images of $\xi_1$ and $\xi_2$ in $X^{(1)}/G$ meet at four points. We have 
$$
\Br([X/G])=(\bZ/2\bZ)^3.
$$

\

\noindent  \fbox{2.G4.2} The model is given by
$$
 X=\{w^2=x^4+y^4+z^4+xyL_1(xy,z^2)\}\subset\bP(2,1,1,1)_{w,x,y,z}, 
 $$
 $$
G=\langle\sigma_{1},\sigma_2\rangle=C_2\times C_4,
$$
$$
\sigma_1 \colon(w,x,y,z)\mapsto (-w,x,y,z),\quad\sigma_2 \colon(w,x,y,z)\mapsto (w,x,-y,\zeta_4z).
$$
The fixed curves stratification is 
$$
    \centering
    \begin{tabular}{c|c|c|c|c|c|c}
$i$&Curves $\xi_i$& $I_{\xi_i}$&$D_{\xi_i}$&$\rg(\xi_i)$&$\rg(\xi_i/D_{\xi_i})$&Standard form\\\hline
    1&$ \{w=0\}$& $C_2=\langle\sigma_1\rangle$&$G$&3&1&\multirow{2}{*}{yes}\\\cline{1-6}
    2&$ \{z=0\}$& $C_2=\langle\sigma_2^2\rangle$&$G$&1&0&\\    \end{tabular}
$$
The images of $\xi_1$ and $\xi_2$ in $X^{(1)}/G$ meet at two points. We have 
$$
\Br([X/G])=(\Z/2\Z)^3.
$$

\

\noindent  \fbox{2.G6} The model is given by
$$
 X=\{w^2=x^3y+y^4+z^4+\lambda y^2z^2\}\subset\bP(2,1,1,1)_{w,x,y,z}, 
 $$
 $$
G=\langle\sigma_{1},\sigma_2\rangle=C_2\times C_6,
$$
$$
\sigma_1 \colon(w,x,y,z)\mapsto (-w,x,y,z),\quad\sigma_2 \colon(w,x,y,z)\mapsto (w,\zeta_3x,y,-z).
$$
The fixed curves stratification is 
$$
    \centering
    \begin{tabular}{c|c|c|c|c|c|c}
$i$&Curves $\xi_i$& $I_{\xi_i}$&$D_{\xi_i}$&$\rg(\xi_i)$&$\rg(\xi_i/D_{\xi_i})$&Standard form\\\hline
    1&$ \{w=0\}$& $C_2=\langle\sigma_1\rangle$&$G$&3&0&\multirow{3}{*}{yes}\\\cline{1-6}
    2&$ \{z=0\}$& $C_2=\langle\sigma_2^3\rangle$&$G$&1&0&\\ \cline{1-6}
     3&$ \{x=0\}$& $C_3=\langle\sigma_2^2\rangle$&$G$&1&0&\\ 
    \end{tabular}
$$
 The images of $\xi_1$ and $\xi_2$ in $X^{(1)}/G$ meet at two points. We have 
$$
\Br([X/G])=\bZ/2\bZ.
$$

\

\noindent  \fbox{2.G8} The model is given by
$$
 X=\{w^2=x^3y+xy^3+z^4\}\subset\bP(2,1,1,1)_{w,x,y,z}, 
\quad
G=\langle\sigma_{1},\sigma_2\rangle=C_2\times C_8,
$$
$$
\sigma_1 \colon(w,x,y,z)\mapsto (-w,x,y,z),\quad\sigma_2 \colon(w,x,y,z)\mapsto (w,\zeta_8x,-\zeta_8y,z).
$$
The fixed curves stratification is 
$$
    \centering
    \begin{tabular}{c|c|c|c|c|c|c}
$i$&Curves $\xi_i$& $I_{\xi_i}$&$D_{\xi_i}$&$\rg(\xi_i)$&$\rg(\xi_i/D_{\xi_i})$&Standard form\\\hline
    1&$ \{w=0\}$& $C_2=\langle\sigma_1\rangle$&$G$&3&0&\multirow{2}{*}{yes}\\\cline{1-6}
    2&$ \{z=0\}$& $C_4=\langle\sigma_1\sigma_2^2\rangle$&$G$&1&0&\\ 
    \end{tabular}
$$
The images of $\xi_1$ and $\xi_2$ in $X^{(1)}/G$ meet at three points. We have
$$
\Br([X/G])=(\bZ/2\bZ)^2.
$$

\

\noindent  \fbox{2.G12} The model is given by
$$
 X=\{w^2=x^3y+y^4+z^4\}\subset\bP(2,1,1,1)_{w,x,y,z}, 
\quad
G=\langle\sigma_{1},\sigma_2\rangle=C_2\times C_{12},
$$
$$
\sigma_1 \colon(w,x,y,z)\mapsto (-w,x,y,z),\quad\sigma_2 \colon(w,x,y,z)\mapsto (w,\zeta_3x,y,\zeta_4z).
$$
The fixed curves stratification is 
$$
    \centering
    \begin{tabular}{c|c|c|c|c|c|c}
$i$&Curves $\xi_i$& $I_{\xi_i}$&$D_{\xi_i}$&$\rg(\xi_i)$&$\rg(\xi_i/D_{\xi_i})$&Standard form\\\hline
    1&$ \{w=0\}$& $C_2=\langle\sigma_1\rangle$&$G$&3&0&\multirow{3}{*}{yes}\\\cline{1-6}
    2&$ \{x=0\}$& $C_3=\langle\sigma_2^4\rangle$&$G$&1&0&\\\cline{1-6}
        3&$ \{z=0\}$& $C_4=\langle\sigma_2^3\rangle$&$G$&1&0&\\ 
    \end{tabular}
$$
The images of $\xi_1$ and $\xi_3$ in $X^{(1)}/G$ meet at two points. We have
$$
\Br([X/G])=\bZ/2\bZ.
$$

\

\noindent  \fbox{2.G22} The model is given by
$$
 X=\{w^2=L_2(x^2,y^2,z^2)\}\subset\bP(2,1,1,1)_{w,x,y,z}, 
\quad
G=\langle\sigma_{1},\sigma_2,\sigma_3\rangle=C_2^3,
$$
$$
\sigma_1 \colon(w,x,y,z)\mapsto (-w,x,y,z),\quad\sigma_2 \colon(w,x,y,z)\mapsto (w,x,-y,z),
$$
$$
\sigma_3 \colon(w,x,y,z)\mapsto (w,x,y,-z).
$$
The fixed curves stratification is 
$$
    \centering
    \begin{tabular}{c|c|c|c|c|c|c}
$i$&Curves $\xi_i$& $I_{\xi_i}$&$D_{\xi_i}$&$\rg(\xi_i)$&$\rg(\xi_i/D_{\xi_i})$&Standard form\\\hline
    1&$ \{w=0\}$& $C_2=\langle\sigma_1\rangle$&$G$&3&0&\multirow{4}{*}{yes}\\\cline{1-6}
    2&$ \{x=0\}$& $C_2=\langle\sigma_2\sigma_3\rangle$&$G$&1&0&\\\cline{1-6}
        3&$ \{y=0\}$& $C_2=\langle\sigma_2\rangle$&$G$&1&0&\\ \cline{1-6}
        4&$ \{z=0\}$& $C_2=\langle\sigma_3\rangle$&$G$&1&0&\\ 
    \end{tabular}
$$
Let $p_{ij}$ be the number of intersection points of the images of $\xi_i$ and $\xi_j$ in $X^{(1)}/G$. We record 
$$
p_{12}=p_{13}=p_{14}=2,\quad p_{23}=p_{24}=p_{34}=1.
$$
Since the intersection of any three of the four curves $\xi_i$, $i=1,2,3,4$ is empty, we have
$$
\Br([X/G])=(\bZ/2\bZ)^6.
$$

\

\noindent  \fbox{2.G24} The model is given by
$$
 X=\{w^2=x^4+y^4+z^4+\lambda x^2y^2\}\subset\bP(2,1,1,1)_{w,x,y,z}, 
$$
$$
G=\langle\sigma_{1},\sigma_2,\sigma_3\rangle=C_2^2\times C_4,\quad\sigma_1 \colon(w,x,y,z)\mapsto (-w,x,y,z),
$$
$$
\sigma_2 \colon(w,x,y,z)\mapsto (w,x,-y,z),\quad\sigma_3 \colon(w,x,y,z)\mapsto (w,x,y,\zeta_4z).
$$
The fixed curves stratification is 
$$
    \centering
    \begin{tabular}{c|c|c|c|c|c|c}
$i$&Curves $\xi_i$& $I_{\xi_i}$&$D_{\xi_i}$&$\rg(\xi_i)$&$\rg(\xi_i/D_{\xi_i})$&Standard form\\\hline
    1&$ \{w=0\}$& $C_2=\langle\sigma_1\rangle$&$G$&3&0&\multirow{4}{*}{yes}\\\cline{1-6}
    2&$ \{x=0\}$& $C_2=\langle\sigma_2\sigma_3^2\rangle$&$G$&1&0&\\\cline{1-6}
        3&$ \{y=0\}$& $C_2=\langle\sigma_2\rangle$&$G$&1&0&\\ \cline{1-6}
        4&$ \{z=0\}$& $C_4=\langle\sigma_3\rangle$&$G$&1&0&\\ 
    \end{tabular}
$$
Let $p_{ij}$ be the number of intersection points of the images of $\xi_i$ and $\xi_j$ in $X^{(1)}/G$. We record 
$$
p_{14}=2,\quad p_{12}=p_{13}=p_{23}=p_{24}=p_{34}=1.
$$
Since the intersection of any three of the four curves $\xi_i$, $i=1,2,3,4$ is empty, we have
$$
\Br([X/G])=(\bZ/2\bZ)^4.
$$

\

\noindent  \fbox{2.G44} The model is given by
$$
 X=\{w^2=x^4+y^4+z^4\}\subset\bP(2,1,1,1)_{w,x,y,z}, 
$$
$$
G=\langle\sigma_{1},\sigma_2,\sigma_3\rangle=C_2\times C_4^2,\quad\sigma_1 \colon(w,x,y,z)\mapsto (-w,x,y,z),
$$
$$
\sigma_2 \colon(w,x,y,z)\mapsto (w,x,\zeta_4y,z),\quad\sigma_3 \colon(w,x,y,z)\mapsto (w,x,y,\zeta_4z).
$$
The fixed curves stratification is 
$$
    \centering
    \begin{tabular}{c|c|c|c|c|c|c}
$i$&Curves $\xi_i$& $I_{\xi_i}$&$D_{\xi_i}$&$\rg(\xi_i)$&$\rg(\xi_i/D_{\xi_i})$&Standard form\\\hline
    1&$ \{w=0\}$& $C_2=\langle\sigma_1\rangle$&$G$&3&0&\multirow{4}{*}{yes}\\\cline{1-6}
    2&$ \{x=0\}$& $C_4=\langle\sigma_1\sigma_2\sigma_3\rangle$&$G$&1&0&\\\cline{1-6}
        3&$ \{y=0\}$& $C_4=\langle\sigma_2\rangle$&$G$&1&0&\\ \cline{1-6}
        4&$ \{z=0\}$& $C_4=\langle\sigma_3\rangle$&$G$&1&0&\\ 
    \end{tabular}
$$
Let $p_{ij}$ be the number of intersection points of the images of $\xi_i$ and $\xi_j$ in $X^{(1)}/G$. We record 
$$
p_{12}=p_{13}=p_{14}=p_{23}=p_{24}=p_{34}=1.
$$
Since the intersection of any three of the four curves $\xi_i$, $i=1,2,3,4$ is empty, we have
$$
\Br([X/G])=(\bZ/2\bZ)^2\oplus(\bZ/4\bZ).
$$

\

\noindent  \fbox{2.24.1} The model is given by
$$
 X=\{w^2=x^4+y^4+z^4+\lambda x^2y^2\}\subset\bP(2,1,1,1)_{w,x,y,z}, 
$$
$$
G=\langle\sigma_{1},\sigma_2\rangle=C_2\times C_4,\quad\sigma_1 \colon(w,x,y,z)\mapsto (w,x,-y,z),
$$
$$
\sigma_2 \colon(w,x,y,z)\mapsto (w,x,y,\zeta_4z).
$$
The fixed curves stratification is 
$$
    \centering
    \begin{tabular}{c|c|c|c|c|c|c}
$i$&Curves $\xi_i$& $I_{\xi_i}$&$D_{\xi_i}$&$\rg(\xi_i)$&$\rg(\xi_i/D_{\xi_i})$&Standard form\\\hline
    1&$ \{y=0\}$& $C_2=\langle\sigma_1\rangle$&$G$&1&0&\multirow{3}{*}{yes}\\\cline{1-6}
    2&$ \{x=0\}$& $C_2=\langle\sigma_1\sigma_2^2\rangle$&$G$&1&0&\\\cline{1-6}
        3&$ \{z=0\}$& $C_4=\langle\sigma_2\rangle$&$G$&1&0&\\ 
    \end{tabular}
$$
Let $p_{ij}$ be the number of intersection points of the images of $\xi_i$ and $\xi_j$ in $X^{(1)}/G$. We record 
$$
 p_{13}=p_{23}=2,\quad p_{12}=1.
$$
Since $\xi_1\cap\xi_2\cap\xi_3$ is empty, we have
$$
\Br([X/G])=(\bZ/2\bZ)^3.
$$

\

\noindent  \fbox{2.24.2} The model is given by
$$
 X=\{w^2=x^4+y^4+z^4+\lambda x^2y^2\}\subset\bP(2,1,1,1)_{w,x,y,z}, 
$$
$$
G=\langle\sigma_{1},\sigma_2\rangle=C_2\times C_4,\quad\sigma_1 \colon(w,x,y,z)\mapsto (-w,x,-y,z),
$$
$$
\sigma_2 \colon(w,x,y,z)\mapsto (w,x,y,\zeta_4z).
$$
The fixed curves stratification is 
$$
    \centering
    \begin{tabular}{c|c|c|c|c|c|c}
$i$&Curves $\xi_i$& $I_{\xi_i}$&$D_{\xi_i}$&$\rg(\xi_i)$&$\rg(\xi_i/D_{\xi_i})$&Standard form\\\hline
    1&$ \{z=0\}$& $C_4=\langle\sigma_2\rangle$&$G$&1&1&yes
    \end{tabular}
$$
We have
$$
\Br([X/G])=(\bZ/4\bZ)^2.
$$

\

\noindent  \fbox{2.44.1} The model is given by
$$
 X=\{w^2=x^4+y^4+z^4\}\subset\bP(2,1,1,1)_{w,x,y,z}, \quad
G=\langle\sigma_{1},\sigma_2\rangle=C_4^2,
$$
$$
\sigma_1 \colon(w,x,y,z)\mapsto (w,x,\zeta_4y,z),
\quad
\sigma_2 \colon(w,x,y,z)\mapsto (w,x,y,\zeta_4z).
$$
The fixed curves stratification is 
$$
    \centering
    \begin{tabular}{c|c|c|c|c|c|c}
$i$&Curves $\xi_i$& $I_{\xi_i}$&$D_{\xi_i}$&$\rg(\xi_i)$&$\rg(\xi_i/D_{\xi_i})$&Standard form\\\hline
    1&$ \{x=0\}$& $C_2=\langle\sigma_1^2\sigma_2^2\rangle$&$G$&1&0&\multirow{3}{*}{yes}\\\cline{1-6}
    2&$ \{y=0\}$& $C_4=\langle\sigma_1\rangle$&$G$&1&0&\\\cline{1-6}
        3&$ \{z=0\}$& $C_4=\langle\sigma_2\rangle$&$G$&1&0&\\ 
    \end{tabular}
$$
Let $p_{ij}$ be the number of intersection points of the images of $\xi_i$ and $\xi_j$ in $X^{(1)}/G$. We record 
$$
 p_{12}=p_{13}=1,\quad p_{23}=2.
$$
Since  $\xi_1\cap\xi_2\cap\xi_3$ is empty, we have
$$
\Br([X/G])=(\bZ/2\bZ)\oplus(\bZ/4\bZ).
$$

\

\noindent  \fbox{2.44.2} The model is given by
$$
 X=\{w^2=x^4+y^4+z^4\}\subset\bP(2,1,1,1)_{w,x,y,z}, \quad
G=\langle\sigma_{1},\sigma_2\rangle=C_4^2,
$$
$$
\sigma_1 \colon(w,x,y,z)\mapsto (-w,x,\zeta_4y,z),
\quad
\sigma_2 \colon(w,x,y,z)\mapsto (w,x,y,\zeta_4z).
$$
The fixed curves stratification is 
$$
    \centering
    \begin{tabular}{c|c|c|c|c|c|c}
$i$&Curves $\xi_i$& $I_{\xi_i}$&$D_{\xi_i}$&$\rg(\xi_i)$&$\rg(\xi_i/D_{\xi_i})$&Standard form\\\hline
    1&$ \{y=0\}$& $C_2=\langle\sigma_1^2\rangle$&$G$&1&0&\multirow{3}{*}{yes}\\\cline{1-6}
    2&$ \{x=0\}$& $C_4=\langle\sigma_1\sigma_2\rangle$&$G$&1&0&\\\cline{1-6}
        3&$ \{z=0\}$& $C_4=\langle\sigma_2\rangle$&$G$&1&0&\\ 
    \end{tabular}
$$
Let $p_{ij}$ be the number of intersection points of the images of $\xi_i$ and $\xi_j$ in $X^{(1)}/G$. We record 
$$
 p_{12}=p_{13}=p_{23}=1.
$$
Since  $\xi_1\cap\xi_2\cap\xi_3$ is empty, we have
$$
\Br([X/G])=\bZ/2\bZ.
$$

\noindent$\bullet$ {\bf Automorphisms of Del Pezzo surfaces of degree $1$}\\

\noindent  \fbox{1.B2.1} The model is given by
$$
X=\{w^2=z^3+zL_2(x^2,y^2)+L_3(x^2,y^2)\}\subset\bP(3,1,1,2)_{w,x,y,z},
$$
$$
G=C_2^2,\quad\sigma_1\colon (w,x,y,z)\mapsto(-w,x,y,z),
\,\,
\sigma_2\colon (w,x,y,z)\mapsto(w,x,-y,z).
$$
The fixed curves stratification is 
$$
    \centering
    \begin{tabular}{c|c|c|c|c|c|c}
$i$&Curves $\xi_i$& $I_{\xi_i}$&$D_{\xi_i}$&$\rg(\xi_i)$&$\rg(\xi_i/D_{\xi_i})$&Standard form\\\hline
    1&$ \{w=0\}$& $C_2=\langle\sigma_1\rangle$&$G$&4&1&\multirow{2}{*}{yes}\\\cline{1-6}
    2&$ \{y=0\}$& $C_2=\langle\sigma_2\rangle$&$G$&1&0&\\ 
    \end{tabular}
$$
The images of $\xi_1$ and $\xi_2$ in $X^{(1)}/G$ intersect in three points. We have
$$
\Br([X/G])=(\bZ/2\bZ)^4.
$$


\

\noindent  \fbox{1.$\sigma\rho$2.1} The model is given by
$$
X=\{w^2=z^3+L_3(x^2,y^2)\}\subset\bP(3,1,1,2)_{w,x,y,z},
\quad
G=C_6\times C_2,
$$
$$
\sigma_1\colon (w,x,y,z)\mapsto(-w,x,y,\zeta_3z),
\quad
\sigma_2\colon (w,x,y,z)\mapsto(w,x,-y,z).
$$
The fixed curves stratification is 
$$
    \centering
    \begin{tabular}{c|c|c|c|c|c|c}
$i$&Curves $\xi_i$& $I_{\xi_i}$&$D_{\xi_i}$&$\rg(\xi_i)$&$\rg(\xi_i/D_{\xi_i})$&Standard form\\\hline
    1&$ \{w=0\}$& $C_2=\langle\sigma_1^3\rangle$&$G$&4&0&\multirow{3}{*}{yes}\\\cline{1-6}
    2&$ \{y=0\}$& $C_2=\langle\sigma_2\rangle$&$G$&1&0&\\\cline{1-6}
    3&$ \{z=0\}$& $C_3=\langle\sigma_1^2\rangle$&$G$&2&0&\\ 
    \end{tabular}
$$
The images of $\xi_1$ and $\xi_2$ in $X^{(1)}/G$ intersect in one point. We have
$$
\Br([X/G])=0.
$$

\

\noindent  \fbox{1.$\sigma\rho$3} The model is given by
$$
X=\{w^2=z^3+L_2(x^3,y^3)\}\subset\bP(3,1,1,2)_{w,x,y,z},
\quad
G=C_6\times C_3,
$$
$$
\sigma_1\colon (w,x,y,z)\mapsto(-w,x,y,\zeta_3z),
\quad
\sigma_2\colon (w,x,y,z)\mapsto(w,x,\zeta_3y,z).
$$
The fixed curves stratification is 
$$
    \centering
    \begin{tabular}{c|c|c|c|c|c|c}
$i$&Curves $\xi_i$& $I_{\xi_i}$&$D_{\xi_i}$&$\rg(\xi_i)$&$\rg(\xi_i/D_{\xi_i})$&Standard form\\\hline
    1&$ \{w=0\}$& $C_2=\langle\sigma_1^3\rangle$&$G$&4&0&\multirow{4}{*}{yes}\\\cline{1-6}
    2&$ \{y=0\}$& $C_3=\langle\sigma_2\rangle$&$G$&1&0&\\\cline{1-6}
     3&$ \{x=0\}$& $C_3=\langle\sigma_1^2\sigma_2\rangle$&$G$&1&0&\\ \cline{1-6}
   4&$ \{z=0\}$& $C_3=\langle\sigma_1^2\rangle$&$G$&2&0&\\ 
    \end{tabular}
$$
Let $p_{ij}$ be the number of intersection points of the images of $\xi_i$ and $\xi_j$ in $X^{(1)}/G$. We record 
$$
 p_{23}=p_{24}=p_{34}=1.
$$
Since  $\xi_2\cap\xi_3\cap\xi_4$ is empty, we have
$$
\Br([X/G])=\bZ/3\bZ.
$$

\

\noindent  \fbox{1.$\rho$3} The model is given by
$$
X=\{w^2=z^3+L_2(x^3,y^3)\}\subset\bP(3,1,1,2)_{w,x,y,z},
\quad
G=C^2_3,
$$
$$
\sigma_1\colon (w,x,y,z)\mapsto(w,x,y,\zeta_3z),
\quad
\sigma_2\colon (w,x,y,z)\mapsto(w,x,\zeta_3y,z).
$$
The fixed curves stratification is 
$$
    \centering
    \begin{tabular}{c|c|c|c|c|c|c}
$i$&Curves $\xi_i$& $I_{\xi_i}$&$D_{\xi_i}$&$\rg(\xi_i)$&$\rg(\xi_i/D_{\xi_i})$&Standard form\\\hline
    1&$ \{z=0\}$& $C_3=\langle\sigma_1\rangle$&$G$&2&0&\multirow{3}{*}{yes}\\\cline{1-6}
    2&$ \{y=0\}$& $C_3=\langle\sigma_2\rangle$&$G$&1&0&\\\cline{1-6}
     3&$ \{x=0\}$& $C_3=\langle\sigma_1^2\sigma_2\rangle$&$G$&1&0&\\
    \end{tabular}
$$
Let $p_{ij}$ be the number of intersection points of the images of $\xi_i$ and $\xi_j$ in $X^{(1)}/G$. We record 
$$
 p_{12}=p_{13}=2,\quad p_{23}=1.
$$
Since  $\xi_1\cap\xi_2\cap\xi_3$ is empty, we have
$$
\Br([X/G])=(\bZ/3\bZ)^3.
$$

\


\noindent  \fbox{1.B4.1} The model is given by
$$
X=\{w^2=z^3+zL_1(x^4,y^4)+x^2L_1'(x^4,y^4)\}\subset\bP(3,1,1,2)_{w,x,y,z},
$$
$$
G=C_2\times C_4,
$$
$$
\sigma_1\colon (w,x,y,z)\mapsto(-w,x,y,z),
\quad
\sigma_2\colon (w,x,y,z)\mapsto(w,x,\zeta_4y,z).
$$
The fixed curves stratification is 
$$
    \centering
    \begin{tabular}{c|c|c|c|c|c|c}
$i$&Curves $\xi_i$& $I_{\xi_i}$&$D_{\xi_i}$&$\rg(\xi_i)$&$\rg(\xi_i/D_{\xi_i})$&Standard form\\\hline
    1&$ \{w=0\}$& $C_2=\langle\sigma_1\rangle$&$G$&4&0&\multirow{3}{*}{yes}\\\cline{1-6}
    2&$ \{x=0\}$& $C_2=\langle\sigma_1\sigma_2^2\rangle$&$G$&1&0&\\\cline{1-6}
     3&$ \{y=0\}$& $C_4=\langle\sigma_2\rangle$&$G$&1&0&\\
    \end{tabular}
$$
Let $p_{ij}$ be the number of intersection points of the images of $\xi_i$ and $\xi_j$ in $X^{(1)}/G$. We record 
$$
p_{12}=2,\quad p_{13}=3,\quad p_{23}=1.
$$
Since  $\xi_1\cap\xi_2\cap\xi_3$ is empty, we have
$$
\Br([X/G])=(\bZ/2\bZ)^4.
$$

\

\noindent  \fbox{1.B6.1} The model is given by
$$
X=\{w^2=z^3+\lambda zx^4+\mu x^6+y^6\}\subset\bP(3,1,1,2)_{w,x,y,z},
\quad
G=C_2\times C_6,
$$
$$
\sigma_1\colon (w,x,y,z)\mapsto(-w,x,y,z),
\quad
\sigma_2\colon (w,x,y,z)\mapsto(w,x,-\zeta_3y,z).
$$
The fixed curves stratification is 
$$
    \centering
    \begin{tabular}{c|c|c|c|c|c|c}
$i$&Curves $\xi_i$& $I_{\xi_i}$&$D_{\xi_i}$&$\rg(\xi_i)$&$\rg(\xi_i/D_{\xi_i})$&Standard form\\\hline
    1&$ \{w=0\}$& $C_2=\langle\sigma_1\rangle$&$G$&4&0&\multirow{3}{*}{yes}\\\cline{1-6}
    2&$ \{x=0\}$& $C_2=\langle\sigma_1\sigma_2^3\rangle$&$G$&1&0&\\\cline{1-6}
     3&$ \{y=0\}$& $C_6=\langle\sigma_2\rangle$&$G$&1&0&\\
    \end{tabular}
$$
Let $p_{ij}$ be the number of intersection points of the images of $\xi_i$ and $\xi_j$ in $X^{(1)}/G$. We record 
$$
p_{13}=3,\quad p_{12}=p_{23}=1.
$$
Since  $\xi_1\cap\xi_2\cap\xi_3$ is empty, we have
$$
\Br([X/G])=(\bZ/2\bZ)^3.
$$

\

\noindent  \fbox{1.$\sigma\rho$6} The model is given by
$$
X=\{w^2=z^3+x^6+y^6\}\subset\bP(3,1,1,2)_{w,x,y,z},
\quad
G=C_6^2,
$$
$$
\sigma_1\colon (w,x,y,z)\mapsto(-w,x,y,\zeta_3z),
\quad
\sigma_2\colon (w,x,y,z)\mapsto(w,x,-\zeta_3y,z).
$$
The fixed curves stratification is 
$$
    \centering
    \begin{tabular}{c|c|c|c|c|c|c}
$i$&Curves $\xi_i$& $I_{\xi_i}$&$D_{\xi_i}$&$\rg(\xi_i)$&$\rg(\xi_i/D_{\xi_i})$&Standard form\\\hline
    1&$ \{w=0\}$& $C_2=\langle\sigma_1^3\rangle$&$G$&4&0&\multirow{4}{*}{yes}\\\cline{1-6}
    2&$ \{z=0\}$& $C_3=\langle\sigma_1^2\rangle$&$G$&2&0&\\\cline{1-6}
    3&$ \{x=0\}$& $C_6=\langle\sigma_1^5\sigma_2\rangle$&$G$&1&0&\\\cline{1-6}
    4&$ \{y=0\}$& $C_6=\langle\sigma_2\rangle$&$G$&1&0&\\
    \end{tabular}
$$
Let $p_{ij}$ be the number of intersection points of the images of $\xi_i$ and $\xi_j$ in $X^{(1)}/G$. We record 
$$
p_{14}=3,\quad p_{13}=p_{23}=p_{24}=p_{34}=1.
$$
Since the intersection of any three of the four curves $\xi_i$, $i=1,2,3,4$ is empty, we have
$$
\Br([X/G])=(\bZ/2\bZ)^3\oplus(\bZ/3\bZ).
$$

\

\noindent  \fbox{1.$\rho$6} The model is given by
$$
X=\{w^2=z^3+x^6+y^6\}\subset\bP(3,1,1,2)_{w,x,y,z},
\quad
G=C_3\times C_6,
$$
$$
\sigma_1\colon (w,x,y,z)\mapsto(w,x,y,\zeta_3z),
\quad
\sigma_2\colon (w,x,y,z)\mapsto(w,x,-\zeta_3y,z).
$$
The fixed curves stratification is 
$$
    \centering
    \begin{tabular}{c|c|c|c|c|c|c}
$i$&Curves $\xi_i$& $I_{\xi_i}$&$D_{\xi_i}$&$\rg(\xi_i)$&$\rg(\xi_i/D_{\xi_i})$&Standard form\\\hline
    1&$ \{z=0\}$& $C_3=\langle\sigma_1\rangle$&$G$&2&0&\multirow{3}{*}{yes}\\\cline{1-6}
    2&$ \{x=0\}$& $C_3=\langle\sigma_1\sigma_2^2\rangle$&$G$&1&0&\\\cline{1-6}
    3&$ \{y=0\}$& $C_6=\langle\sigma_2\rangle$&$G$&1&0&\\
    \end{tabular}
$$
Let $p_{ij}$ be the number of intersection points of the images of $\xi_i$ and $\xi_j$ in $X^{(1)}/G$. We record 
$$
p_{13}=2,\quad p_{12}=p_{23}=1.
$$
Since  $\xi_1\cap\xi_2\cap\xi_3$ is empty, we have
$$
\Br([X/G])=(\bZ/3\bZ)^2.
$$

\

\noindent  \fbox{1.B6.2} The model is given by
$$
X=\{w^2=z^3+\lambda zx^2y^2+x^6+y^6\}\subset\bP(3,1,1,2)_{w,x,y,z},
\quad
G=C_2\times C_6,
$$
$$
\sigma_1\colon (w,x,y,z)\mapsto(-w,x,y,z),
\quad
\sigma_2\colon (w,x,y,z)\mapsto(w,x,-\zeta_3y,\zeta_3z).
$$
The fixed curves stratification is 
$$
    \centering
    \begin{tabular}{c|c|c|c|c|c|c}
$i$&Curves $\xi_i$& $I_{\xi_i}$&$D_{\xi_i}$&$\rg(\xi_i)$&$\rg(\xi_i/D_{\xi_i})$&Standard form\\\hline
    1&$ \{w=0\}$& $C_2=\langle\sigma_1\rangle$&$G$&4&1&\multirow{3}{*}{yes}\\\cline{1-6}
    2&$ \{x=0\}$& $C_2=\langle\sigma_1\sigma_2^3\rangle$&$G$&1&0&\\\cline{1-6}
    3&$ \{y=0\}$& $C_2=\langle\sigma_2^3\rangle$&$G$&1&0&\\
    \end{tabular}
$$
Let $p_{ij}$ be the number of intersection points of the images of $\xi_i$ and $\xi_j$ in $X^{(1)}/G$. We record 
$$
p_{12}=p_{13}=p_{23}=1.
$$
Since  $\xi_1\cap\xi_2\cap\xi_3$ is empty, we have
$$
\Br([X/G])=(\bZ/2\bZ)^3.
$$

\

\noindent  \fbox{1.B12} The model is given by
$$
X=\{w^2=z^3+\lambda zx^4+y^6\}\subset\bP(3,1,1,2)_{w,x,y,z},
\quad
G=C_2\times C_{12},
$$
$$
\sigma_1\colon (w,x,y,z)\mapsto(-w,x,y,z),
\quad
\sigma_2\colon (w,x,y,z)\mapsto(\zeta_4w,x,\zeta_{12}y,-z).
$$

The fixed curves stratification is 
$$
    \centering
    \begin{tabular}{c|c|c|c|c|c|c}
$i$&Curves $\xi_i$& $I_{\xi_i}$&$D_{\xi_i}$&$\rg(\xi_i)$&$\rg(\xi_i/D_{\xi_i})$&Standard form\\\hline
    1&$ \{w=0\}$& $C_2=\langle\sigma_1\rangle$&$G$&4&0&\multirow{3}{*}{yes}\\\cline{1-6}
    2&$ \{x=0\}$& $C_4=\langle \sigma_2^3\rangle$&$G$&1&0&\\\cline{1-6}
3&$ \{y=0\}$& $C_6=\langle \sigma_1\sigma_2^2\rangle$&$G$&1&0&\\
    \end{tabular}
$$

Let $p_{ij}$ be the number of intersection points of the images of $\xi_i$ and $\xi_j$ in $X^{(1)}/G$. We record 
$$
p_{13}=2,\quad p_{12}=p_{23}=1.
$$
Since  $\xi_1\cap\xi_2\cap\xi_3$ is empty, we have
$$
\Br([X/G])=(\bZ/2\bZ)^2.
$$


\bigskip

\subsection{Tables} 
We record the above computations in the following tables.\\

\begin{center}
\captionof*{table}{Cyclic groups $G$}
\begin{longtable}{|c|c|c|c|}
    \hline
    Label in \cite{B06} &Group $G$ & Surface $X$ & $\Br([X/G])$\\
\hline
0.n     & $C_n$&$\bP^2$&0 \\\hline
C.2&$C_2$&conic bundles&vary \\\hline
2.G&$C_2$&dP$_2$&$(\bZ/2\bZ)^{6}$ \\\hline
1.B&$C_2$&dP$_1$&$(\bZ/2\bZ)^{8}$ \\\hline
C.ro.m&$C_{2m}$&conic bundles&vary \\\hline
C.re.m&$C_{2m}$&conic bundles&vary \\\hline
3.3&$C_3$&dP$_3$&$(\bZ/3\bZ)^{2}$ \\\hline
1.$\rho$&$C_3$&dP$_1$&$(\bZ/3\bZ)^{4}$ \\\hline
2.4&$C_4$&dP$_2$&$(\bZ/4\bZ)^{2}$\\\hline
1.B2.2&$C_4$&dP$_1$&$(\bZ/2\bZ)^{4}$\\\hline
1.5&$C_5$&dP$_1$&$(\bZ/5\bZ)^{2}$\\\hline
3.6.1&$C_6$&dP$_3$&$0$\\\hline
3.6.2&$C_6$&dP$_3$&$(\bZ/2\bZ)^{2}$\\\hline
2.G3.1&$C_6$&dP$_2$&0\\\hline
2.G3.2&$C_6$&dP$_2$&$(\bZ/2\bZ)^{2}$\\\hline
2.6&$C_6$&dP$_2$&$(\bZ/3\bZ)^{2}$\\\hline
1.$\sigma\rho$&$C_6$&dP$_1$&$0$\\\hline
1.$\rho2$&$C_6$&dP$_1$&$0$\\\hline
1.B3.1&$C_6$&dP$_1$&$0$\\\hline
1.B3.2&$C_6$&dP$_1$&$(\bZ/2\bZ)^4$\\\hline
1.6&$C_6$&dP$_1$&$(\bZ/6\bZ)^2$\\\hline
1.B4.2&$C_8$&dP$_1$&$\bZ/2\bZ$\\\hline
3.9&$C_9$&dP$_3$&$0$\\\hline
1.B5&$C_{10}$&dP$_1$&$0$\\\hline
3.12&$C_{12}$&dP$_3$&$0$\\\hline
2.12&$C_{12}$&dP$_2$&$0$\\\hline
1.$\sigma\rho$2.2&$C_{12}$&dP$_1$&$0$\\\hline
2.G7&$C_{14}$&dP$_2$&$0$\\\hline
1.$\rho5$&$C_{15}$&dP$_1$&$0$\\\hline
2.G9&$C_{18}$&dP$_2$&$0$\\\hline
1.B10&$C_{20}$&dP$_1$&$0$\\\hline
1.$\sigma\rho4$&$C_{24}$&dP$_1$&$0$\\\hline
1.$\sigma\rho5$&$C_{30}$&dP$_1$&$0$\\\hline
\end{longtable}
\end{center}

\begin{center}
\captionof*{table}{Noncyclic groups $G$}
\begin{longtable}{|c|c|c|c|}
    \hline
    Label in \cite{B06} &Group $G$ & Surface $X$ & $\Br([X/G])$\\
\hline
0.mn&$C_n\times C_{m}$&$\bP^1\times\bP^1$&$\bZ/\mathrm{gcd}(m,n)\bZ$ \\\hline
P1.22n&$C_2\times C_{2n}$&$\bP^1\times\bP^1$&$0$ \\\hline
P1.222n&$C_2^2\times C_{2n}$&$\bP^1\times\bP^1$&$(\bZ/2\bZ)^2$ \\\hline
P1.22.1&$C_2^2$&$\bP^1\times\bP^1$&$0$ \\\hline
P1.222&$C_2^3$&$\bP^1\times\bP^1$&$(\bZ/2\bZ)^2$ \\\hline
P1.2222&$C_2^4$&$\bP^1\times\bP^1$&$(\bZ/2\bZ)^4$ \\\hline
P1s.24&$C_2\times C_{4}$&$\bP^1\times\bP^1$&0 \\\hline
P1s.222&$C_2^3$&$\bP^1\times\bP^1$& ${(\bZ/2\bZ)^3}$ \\\hline
0.V9&$C_3^2$&$\bP^2$& $0$ \\\hline
4.222&$C_2^3$&dP$_4$& $(\bZ/2\bZ)^4$ \\\hline
4.2222&$C_2^4$&dP$_4$& $(\bZ/2\bZ)^6$ \\\hline
4.42&$C_4\times C_2$&dP$_4$& $(\bZ/2\bZ)^2$ \\\hline
3.33.1&$C_3^2$&dP$_3$&$(\bZ/3\bZ)^2$  \\\hline
3.33.2&$C_3^2$&dP$_3$&$(\bZ/3\bZ)^2$  \\\hline
3.36&$C_3\times C_6$&dP$_3$&$\bZ/3\bZ$  \\\hline
3.333&$C_3^3$&dP$_3$&$(\bZ/3\bZ)^3$  \\\hline
2.G2&$C_2^2$&dP$_2$&$(\bZ/2\bZ)^5$ \\\hline
2.G4.1&$C_2\times C_4$&dP$_2$&$(\bZ/2\bZ)^3$ \\\hline
2.G4.2&$C_2\times C_4$&dP$_2$&$(\bZ/2\bZ)^3$ \\\hline
2.G6&$C_2\times C_6$&dP$_2$&$\bZ/2\bZ$ \\\hline
2.G8&$C_2\times C_8$&dP$_2$&$(\bZ/2\bZ)^2$ \\\hline
2.G12&$C_2\times C_{12}$&dP$_2$&$\bZ/2\bZ$ \\\hline
2.G22&$C_2^3$&dP$_2$&$(\bZ/2\bZ)^6$ \\\hline
2.G24&$C_2^2\times C_4$&dP$_2$&$(\bZ/2\bZ)^4$ \\\hline
2.G44&$C_2\times C_4^2$&dP$_2$&$(\bZ/2\bZ)^2\oplus(\bZ/4\bZ)$ \\\hline
2.24.1&$C_2\times C_4$&dP$_2$&$(\bZ/2\bZ)^3$ \\\hline
2.24.2&$C_2\times C_4$&dP$_2$&$(\bZ/4\bZ)^2$ \\\hline
2.44.1&$C_4^2$&dP$_2$&$(\bZ/2\bZ)\oplus(\bZ/4\bZ)$\\\hline
2.44.2&$C_4^2$&dP$_2$&$\bZ/2\bZ$\\\hline
1.B2.1&$C_2^2$&dP$_1$&$(\bZ/2\bZ)^4$\\\hline
1.$\sigma\rho2$.1&$C_6\times C_2$&dP$_1$&0\\\hline
1.$\sigma\rho3$&$C_6\times C_3$&dP$_1$&$\bZ/3\bZ$\\\hline
1.$\rho3$&$C_3^2$&dP$_1$&$(\bZ/3\bZ)^3$\\\hline
1.B4.1&$C_2\times C_4$&dP$_1$&$(\bZ/2\bZ)^4$\\\hline
1.B6.1&$C_2\times C_6$&dP$_1$&$(\bZ/2\bZ)^3$\\\hline
1.$\sigma\rho$6&$C_6^2$&dP$_1$&$(\bZ/2\bZ)^3\oplus(\bZ/3\bZ)$\\\hline
1.$\rho$6&$C_3\times C_6$&dP$_1$&$(\bZ/3\bZ)^2$\\\hline
1.B6.2&$C_2\times C_6$&dP$_1$&$(\bZ/2\bZ)^3$\\\hline
1.B12&$C_2\times C_{12}$&dP$_1$&$(\bZ/2\bZ)^2$\\\hline
\end{longtable}
\end{center}

\end{document}